\title{Veech groups of flat structures on Riemann surfaces}
\date{}
\author{Yoshihiko Shinomiya}
\address{Department of Mathematics
Tokyo Institute of Technology
2-12-1 Ookayama, Meguro-ku, Tokyo 152-8551, JAPAN}
\email{shinomiya.y.aa@m.titech.ac.jp}
\begin{document}
\maketitle
\bibliographystyle{alpha} 
%%%%% TEXT START %%%%%

\theoremstyle{plain}
\newtheorem{theorem}{Theorem}[section]

\theoremstyle{definition}
\newtheorem{definition}[theorem]{Definition}

\theoremstyle{plain}
\newtheorem{proposition}[theorem]{Proposition}

\theoremstyle{plain}
\newtheorem{lemma}[theorem]{Lemma}

\theoremstyle{plain}
\newtheorem{corollary}[theorem]{Corollary}

\theoremstyle{definition}
\newtheorem{example}[theorem]{Example}

\theoremstyle{remark}
\newtheorem*{remark}{\bf Remark}

\begin{abstract}

In this paper, we construct new examples of Veech groups by extending Schmith\"usen's method 
for calculating Veech groups of origamis to Veech groups of unramified finite coverings of regular $2n$-gons.
We calculate the Veech groups of certain Abelian coverings of regular $2n$-gons by using an algebraic method.
% We show that in our case, the calculations of Veech groups of Abelian coverings always stop.
\end{abstract}

\section{Introduction}
The Teichm\"uller disk is a holomorphic isometric embedding of an upper-half plane $\mathbb{H}$ (or a unit disk) into a Teichm\"uller space.
All such embeddings are constructed by flat structures on Riemann surfaces and ${\rm SL}(2,\mathbb{R})$-orbit on flat structures.
To study the image of a Teichm\"uller disk into the moduli space, we consider the stabilizer of the Teichm\"uller disk in the mapping class group.
Veech \cite{Veech89} showed that this stabilizer is regarded as the group of all affine diffeomorphisms on a corresponding flat structure and its action can be represented by a Fuchsian group which acts on $\mathbb{H}$.
The Fuchsian group is called a Veech group.

The first non-trivial examples of Veech groups were given by Veech \cite{Veech89} and \cite{Veech91}. 
His examples are constructed by gluing two congruent regular polygons along one side and identifying the parallel sides of the resulting polygons. 
However, not so many examples are known other than Veech's. 
Recently, Schmith\"usen \cite{Schmithusen04} showed an algorithm for finding Veech groups of ``origami''. 
 An origami is an unramified finite covering of a once punctured torus constructed by a unit square.
We apply her method to unramified finite coverings of regular $2n$-gons instead of the unit square
to obtain other examples of Veech groups. 
Veech groups of universal coverings play an important role in her method.
We call these groups universal Veech groups. 

In this paper, we determine the universal Veech groups of $2n$-gons and give an algorithm to calculate Veech groups of finite Abelian coverings of $2n$-gons.
In the case of origamis, Schmith\"usen connected the Veech groups of origamis with 
subgroups of ${\rm SL}(2, \mathbb{Z})$.
She showed that the calculations of Veech groups stop in finitely many steps.
In our case, for the Veech groups of Abelian coverings of $2n$-gons whose degree is $d$, we connect them 
with subgroups of  ${\rm SL}(n, \mathbb{Z}_d)$.
We show that the calculations of Veech groups of certain Abelian coverings 
can be done by using the corresponding subgroups of  ${\rm SL}(n, \mathbb{Z}_d)$.
% always stops.
% In the case of origamis, she showed that calculations always stop.
% In our case, we show that the calculation of Veech groups of finite Abelian coverings always stop.

\section{Definitions}
Let $X$ be a Riemann surface of type $(g,n)$ with $3g-3+n>0$. 
\begin{definition}[Holomorphic quadratic differential]
A holomorphic quadratic differential $\varphi$ on $X$ is a tensor whose restriction to every coordinate neighborhood $(U,z)$ is the form $fdz^2$, here $f$ is a holomorphic function on $U$.\\
We define $|\varphi |$ to be the differential 2-form on $X$ whose restriction to every coordinate neighborhood $(U,z)$ has the form $|f|dxdy$ if $\varphi$ equals $fdz^2$ in $U$. We say $\varphi$ is integrable if its norm
\begin{eqnarray}
||\varphi ||=\int\!\!\!\int_X|\varphi| \nonumber 
\end{eqnarray}
is finite.
\end{definition}
We fix an integrable holomorphic quadratic differential $\varphi$. 
Denote by $X^\prime$ the Riemann surface constructed from $X$ by removing zeros of $\varphi$.

\begin{definition}[Flat structure]
A flat structure ${\it u}$ on $X^{\prime}$ is an atlas of $X^{\prime}$ which satisfies the following conditions.
\begin{enumerate} 
\item[(1)] Local coordinates of ${\it u}$ are compatible with the orientation on $X^{\prime}$ induced by its Riemann surface structure.
\item[(2)] For coordinate neighborhoods $(U,z)$ and $(V,w)$ of ${\it u}$ with $U\cap V \not=\phi $, the transition function is the form 
\begin{center}
$w=\pm z+c$ 
\end{center}
in $z(U\cap V)$ for some $c \in \mathbb{C}$.
\item[(3)]  ${\it u}$ is maximal with respect to $(1)$ and $(2)$.
\end{enumerate}
\end{definition}
The holomorphic quadratic differential $\varphi$ determines a flat structure $u_\varphi $ on $X^{\prime}$ as follows.\\
For each $p_0\in X^{\prime}$, we can choose an open neighborhood $U$ such that
\begin{eqnarray}
z(p)=\int_{p_0}^{p}\sqrt\varphi \nonumber
\end{eqnarray} 
is a well-defined and injective function of $U$. This function is holomorphic in $U$ since $\varphi$ is a holomorphic quadratic differential. 
If $(U,z)$ and $(V,w)$ are pairs of such neighborhoods and functions with $ U \cap V\not=\phi$, then we have $dw^2=\varphi=dz^2$ in $U \cap V$. 
Hence $w=\pm z+c$ in $z(U\cap V)$ for some $c \in \mathbb{C}$. 
The flat structure $u_\varphi $ is the maximal flat structure which contains such pairs.

\begin{definition}[Affine group of $\varphi$]
The affine group $Aff^+(X, \varphi)$ of the integrable holomorphic quadratic differential $\varphi$ is the group of all quasiconformal mappings $f$ of $X$ onto itself which satisfy $f(X^\prime)=X^\prime$ and are affine with respect to the flat structure $u_\varphi$. 
This means that for $(U,z), (V,w) \in u_\varphi$ with $f(U) \subseteq V$, the homeomorphism $w\circ f\circ z^{-1}$ is the form $z \mapsto Az+c$ for some $A \in {\rm GL} (2,\mathbb{R})$ and $c \in \mathbb{C}$.
\end{definition}
This $A$ is uniquely determined up to the sign since $u_\varphi$ is a flat structure.
And $A$ is always in ${\rm SL}(2,\mathbb{R})$ since $||\varphi||=\int_X |\varphi|=\int_X f^*(|\varphi|)=\det(A)||\varphi||$. 
Thus we have a group homomorphism

\begin{center}
$D: Aff^+(X,\varphi) \rightarrow {\rm PSL}(2,\mathbb{R})$.
\end{center}

\begin{definition}[Veech group of $\varphi$]
We call $\Gamma(X,\varphi)=D(Aff^+(X,\varphi))$ the Veech group of $\varphi$. 
\end{definition}

\begin{remark}
Veech groups are discrete subgroups of ${\rm PSL}(2,\mathbb{R})$ (see \cite{EarGar97}). 
\end{remark}

\section{Examples of Veech groups}
In this section, we see two examples of Veech groups.
The first example is a new example of Veech groups.
The second one is the main target of this paper.
The purpose of this paper is to determine Veech groups of some coverings of the second one.
To do this, we need to determine the Veech group of the second one.

\begin{example}
Let $X$ be a surface constructed as Figure \ref{1}.
We induce an unique conformal structure on $X$ such that the quadratic differential $dz^2$ on the interior of the rectangle of Figure \ref{1} extends to a holomorphic quadratic differential $\varphi$ on $X$. 
Then $X$ is a Riemann surface of type $(2,0)$ and vertices of four squares become two points on $X$. 
These points are zeros of $\varphi$ of order 2.
We can see that 
{  $\left(
  \begin{array}{cc}
    1 & 1 \\
    0 & 1 
  \end{array}
  \right)$}
 and 
 {  $\left(
  \begin{array}{cccc}
    1 & 0 \\
    2 & 1
  \end{array}
  \right)$}
define elements in $Aff^+(X,\varphi)$ as Figure \ref{2}.
Hence 
$\Gamma=\left< {  \left[\left(
  \begin{array}{cccc}
    1 & 1 \\
    0 & 1 
  \end{array}
  \right)
  \right]}, 
{   \left[\left(
  \begin{array}{cccc}
    1 & 0 \\
    2 & 1
  \end{array}
  \right)
  \right] }
\right>$ is a subgroup of the Veech group $\Gamma(X,\varphi)$. Since every element in $Aff^+(X,\varphi)$ must preserve the set of all lattice points, $\Gamma(X,\varphi)$ is a subgroup of ${\rm PSL}(2,\mathbb{Z})$. It is known that $\left<
{  \left[ \left(\begin{array}{cccc}
    1 & 2 \\
    0 & 1 
  \end{array}
 \right) \right] }, 
 { 
  \left[\left(
  \begin{array}{cccc}
    1 & 0 \\
    2 & 1
  \end{array} \right)
  \right]}
\right>$ is the congruence subgroup of level $2$ and has index $6$ in ${\rm PSL}(2,\mathbb{Z})$. 
Hence $\Gamma(X,\varphi)$ is either $\Gamma$ or ${\rm PSL}(2,\mathbb{Z})$. 
However, 
  ${ 
\left(
  \begin{array}{cccc}
    1 & 0 \\
    1 & 1 
  \end{array}
  \right)}$ 
cannot be an element in  $\Gamma(X,\varphi)$. Therefore $\Gamma(X,\varphi)$ must be $\Gamma$.\\
\begin{figure}[h]
 \begin{center}
  \includegraphics[keepaspectratio, scale=0.45]{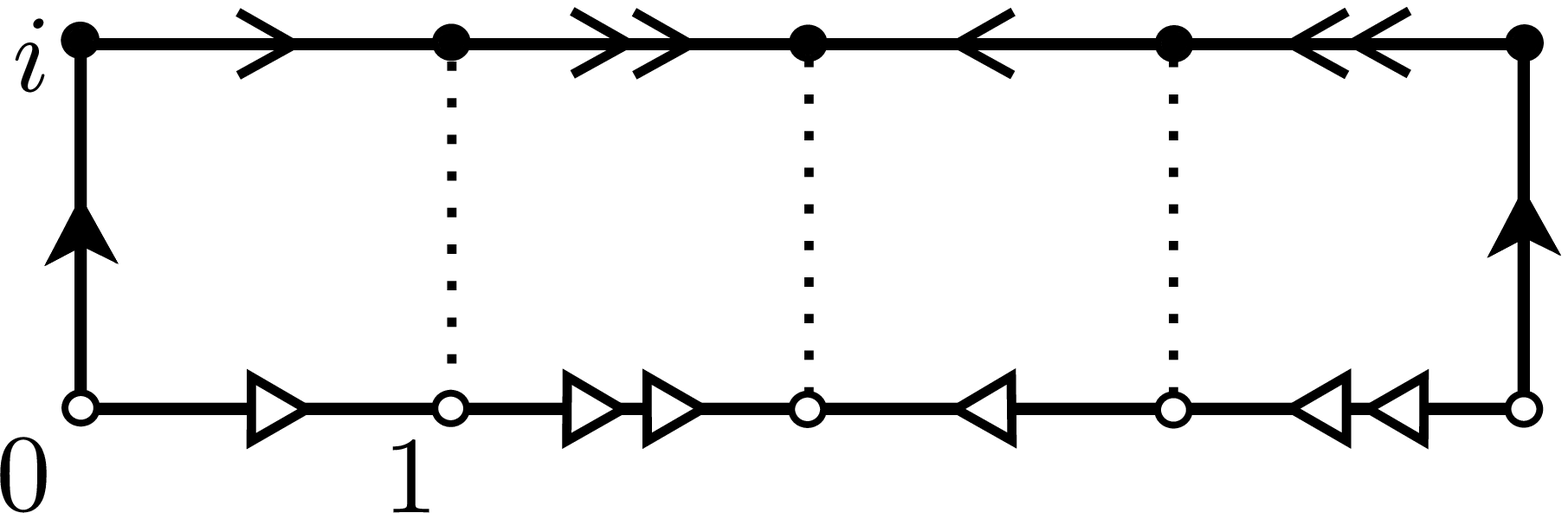}

\caption{}
\label{1}
\end{center}

\end{figure}
 \begin{figure}[h]
 \begin{center}
  \includegraphics*[keepaspectratio, scale=0.45]{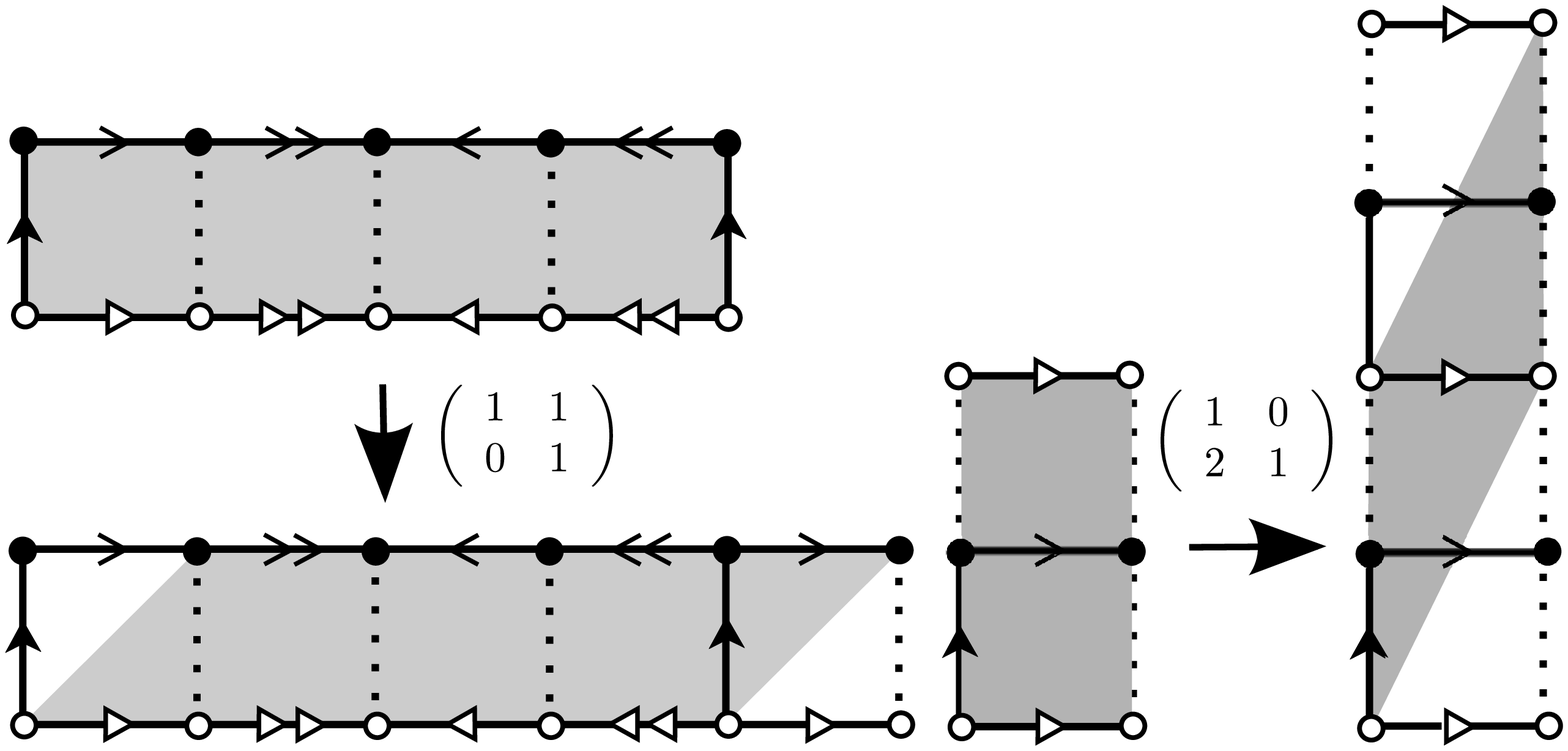}
 \label{Veechexam1-2}
 \caption{}
\label{2}
\end{center}
\end{figure}
\end{example}
The next example is given by Earle and Gardiner (\cite{EarGar97}).
\begin{example}\label{exampleofveech2}
Fix $n\geq 4$ and let $\Pi_{2n}$ be a regular $2n$-gon.
We assume that $\Pi_{2n}$ has two horizontal sides, lengths of the sides are 1 and its vertices are removed. 
We identify each side of  $\Pi_{2n}$ with the opposite parallel side by an Euclidean translation (see Figure \ref{3}) and denote the resulting surface by $P_{2n}$.
We induce an unique conformal structure on $P_{2n}$ such that the quadratic differential $dz^2$ on the interior of $\Pi_{2n}$ extends to a holomorphic quadratic differential $\varphi_{2n}$ on $P_{2n}$. 
 If $n$ is even, then $P_{2n}$ is a Riemann surface of type $(\frac{n}{2}, 1)$ and if $n$ is odd, then $P_{2n}$ is a Riemann surface of type $(\frac{n-1}{2}, 2)$. 
Now
$R_{2n}={  
\left(
  \begin{array}{cccc}
    \cos\frac{\pi}{n} & -\sin\frac{\pi}{n} \\
    \sin\frac{\pi}{n} & \cos\frac{\pi}{n} 
\end{array}
  \right)
}$
and 
$T_{2n}={ 
\left(
  \begin{array}{cccc}
    1 & 2\cot\frac{\pi}{2n} \\
    0 & 1
\end{array}
  \right)}$ 
induce elements in $Aff^+(P_{2n}, \varphi_{2n} )$. 
The action of $R_{2n}$ on $P_{2n}$ is the rotation about the center of $\Pi_{2n}$ of angle $\frac{\pi}{n}$. 
To see the action of $T_{2n}$ on $P_{2n}$, we cut $P_{2n}$ along all horizontal segments which connect the vertices of $\Pi_{2n}$. 
If $n$ is even, $P_{2n}$ is decomposed into $\frac{n}{2}$ cylinders and 
the action of $T_{2n}$ is the composition of the square of the right Dehn twist along a core curve of the cylinder which contains the center of $\Pi_{2n}$ and the right Dehn twists along core curves of the other cylinders. 
If $n$ is odd,  $P_{2n}$ is decomposed into $\frac{n-1}{2}$ cylinders and 
the action of $T_{2n}$ is the composition of the right Dehn twists along core curves of all cylinders. 
Thus $\Gamma=\left<[R_{2n}],[T_{2n}]\right>$ is a subgroup of the Veech group $\Gamma(P_{2n}, \varphi_{2n})$. 
It is easy to see that $\Gamma$ is a $(n, \infty, \infty)$ triangle group.
Since only discrete group that contains $\Gamma$ is a $(2, 2n, \infty)$ triangle group (see \cite{EarGar97} and \cite{Singerman72})
and this cannot be $\Gamma(P_{2n}, \varphi_{2n})$, we have $\Gamma(P_{2n}, \varphi_{2n})=\left<[R_{2n}],[T_{2n}]\right>$.
\begin{figure}[h]
 \begin{center}
 \includegraphics*[keepaspectratio, scale=0.45]{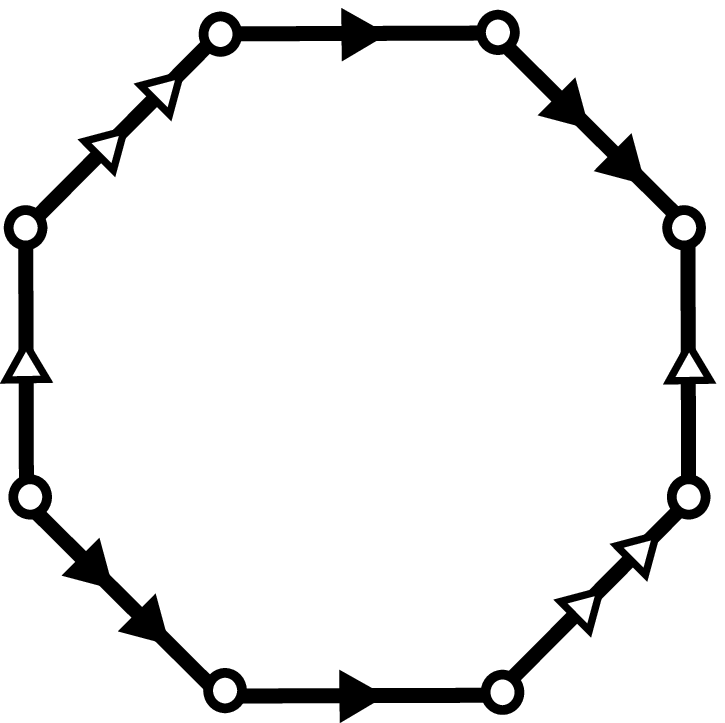}
\caption{}
\label{3}
 \end{center}
 \end{figure}
\end{example}

\section{Veech groups of coverings of $P_{2n}$ and Universal Veech group of $P_{2n}$}
Fix $n\geq4$. 
Let $P_{2n}$ be the same Riemann surface as in Example \ref{exampleofveech2} and $p : X\rightarrow P_{2n}$ be an unramified finite covering mapping.
Set $\varphi_X=p^*\varphi_{2n}$, here $\varphi_{2n}$ is the holomorphic quadratic differential on $P_{2n}$ defined in Example \ref{exampleofveech2}.
Our purpose is to calculate the Veech group $\Gamma(X, \varphi_X)$. 
We denote $\Gamma(X, \varphi_X)$ by $\Gamma(X)$ hereafter. 
Schmith\"usen constructed an algorithm for calculating Veech groups of origamis (\cite{Schmithusen04}).  
We apply her method to our case.

 Let $p_{2n} : {\widetilde X_{2n}} \rightarrow P_{2n}$ be the universal covering mapping and set $\widetilde \varphi_{2n}= p_{2n}^*\varphi_{2n}$.
 Note that $||\widetilde \varphi_{2n}||= +\infty$.
However, we can define the flat structure $u_{\widetilde \varphi_{2n}}$ on $\widetilde X_{2n}$ and the affine group 
$Aff^+(\widetilde X_{2n}, \widetilde \varphi_{2n})$ in the same manner as the case of integrable holomorphic quadratic differentials.
Moreover, we have a homomorphism $D : Aff^+(\widetilde X_{2n}, \widetilde \varphi_{2n}) \rightarrow {\rm PGL}(2, \mathbb{R})$.
Set $\Gamma(\widetilde X_{2n}) = {\rm Im}(D) \cap {\rm PSL}(2,\mathbb{R})$.
\begin{definition}[Universal Veech group of $P_{2n}$]
We call $\Gamma(\widetilde X_{2n})$ the universal Veech group of $P_{2n}$.
\end{definition}
\begin{remark}
Let $X$ be an unramified finite covering of $P_{2n}$. Then for each $f \in Aff^+(X, \varphi_X)$, there exists a lift $\widetilde f \in  Aff^+(\widetilde X_{2n}, \widetilde \varphi_{2n})$ 
with $D(\widetilde f)=D(f)$. Hence $\Gamma(X)$ is a subgroup of  $\Gamma(\widetilde X_{2n})$.
\end{remark}
 The following idea is due to Schmith\"usen (\cite{Schmithusen04}).
For each finite covering $X$ of $P_{2n}$, we take $\Gamma(X)$ as follows.\\
$\Gamma(X)=${\Big \{}$[A]\in \Gamma(\widetilde X_{2n})\ |\ \exists \widetilde f \in Aff^+(\widetilde X_{2n},\widetilde \varphi_{2n})\ {\rm s.t.} \ D(\widetilde f)=[A], \ \widetilde f$ is a lift of a homeomorphism of $X$ onto itself{\Big \}}\\
$=${\Big \{} $[A]\in \Gamma(\widetilde X_{2n})\ |\ \exists \widetilde f\in Aff^+(\widetilde X_{2n},\widetilde \varphi_{2n})\ {\rm s.t.} \ D(\widetilde f)=[A], \widetilde f_*({\rm Gal}(\widetilde X_{2n}/ X))={\rm Gal}(\widetilde X_{2n}/ X)$ {\Big \}}.

To understand $\Gamma(X)$, we determine $\Gamma(\widetilde X_{2n})$.
The following theorem is a main theorem of this paper.
\begin{theorem}\label{main1}
For all $n\geq 4$, $\Gamma(\widetilde X_{2n})=\left< [R_{2n}],[T_{2n}]\right>=\Gamma(P_{2n})$.
\end{theorem}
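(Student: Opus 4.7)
The plan is to establish the remaining inclusion $\Gamma(\widetilde X_{2n})\subseteq\Gamma(P_{2n})$ by proving that every $\widetilde f\in Aff^+(\widetilde X_{2n},\widetilde\varphi_{2n})$ descends to an element of $Aff^+(P_{2n},\varphi_{2n})$. The opposite inclusion is exactly the content of the remark just before the theorem, and Example \ref{exampleofveech2} identifies $\Gamma(P_{2n})$ with $\langle [R_{2n}],[T_{2n}]\rangle$, so this will finish the proof.

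The backbone is the lemma $\ker D=\pi_1(P_{2n})$, where $\pi_1(P_{2n})$ is identified with the deck group of $p_{2n}$ acting on $\widetilde X_{2n}$. The inclusion $\supseteq$ is clear, since deck transformations act as translations in flat charts and so have trivial linear part. For the converse, I would take $\widetilde g\in\ker D$ with linear part $I$ (the case $-I$ is handled by composing with a $\pi$-rotation). Via the developing map, such a $\widetilde g$ corresponds to a single translation $z\mapsto z+v$ on $\mathbb{C}$. Extending $\widetilde g$ to the metric completion of $\widetilde X_{2n}$, it permutes the lifts of the cone singularities of $\varphi_{2n}$, which form a single $\pi_1(P_{2n})$-orbit when $n$ is even (two orbits when $n$ is odd, treated orbitwise). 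Pick $\gamma_0\in\pi_1(P_{2n})$ so that $\widetilde h:=\gamma_0^{-1}\widetilde g$ fixes a chosen cone lift $p_0$; then in developing coordinates $\widetilde h(p_0)=p_0$ forces the translation vector of $\widetilde h$ to vanish, so $\widetilde h$ is the identity in a flat chart near $p_0$. By the rigidity of affine maps on the connected, simply connected surface $\widetilde X_{2n}$, $\widetilde h$ is then the identity globally, and $\widetilde g=\gamma_0\in\pi_1(P_{2n})$.

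Granting the lemma, for any $\widetilde f$ with linear part $A$ and any $\gamma\in\pi_1(P_{2n})$, the conjugate $\widetilde f\gamma\widetilde f^{-1}$ is affine with linear part $A\cdot I\cdot A^{-1}=I$, so it lies in $\ker D=\pi_1(P_{2n})$. Thus $\widetilde f$ normalizes $\pi_1(P_{2n})$ and descends to a homeomorphism $f$ of $P_{2n}$; the descended $f$ is affine with respect to $\varphi_{2n}$ because $\widetilde f$ is, and $D(\widetilde f)=D(f)\in\Gamma(P_{2n})$. This yields $\Gamma(\widetilde X_{2n})\subseteq\Gamma(P_{2n})$ and completes the argument.

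The hardest step is the rigidity claim inside the lemma: an affine map with linear part $I$ fixing a cone point must be the identity on a neighborhood, and hence everywhere. This uses essentially that $\varphi_{2n}$ has a genuine cone singularity, which requires $n\geq 4$; for $n=2,3$ the surface $P_{2n}$ is an unbranched torus and the analogue fails, as every translation of $\mathbb{R}^2\cong\widetilde X_{2n}$ is an affine diffeomorphism in $\ker D$ but is a deck transformation only when its vector lies in the translation lattice.
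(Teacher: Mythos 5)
Your route is genuinely different from the paper's: the paper proves the inclusion $\Gamma(\widetilde X_{2n})\subseteq\langle[R_{2n}],[T_{2n}]\rangle$ by a reduction algorithm (Lemmas \ref{Lemma1} and \ref{Lemma2} decrease $|\cot\theta(A)|$ by multiplying with $T_{2n}^{l}R_{2n}^{k}$, and a discreteness/compactness argument shows the reduction terminates; the finitely many reduced shapes $Q_1,\dots,Q_4$ are then ruled out by hand), whereas you try to prove that every affine homeomorphism of $\widetilde X_{2n}$ descends to $P_{2n}$. Unfortunately your key lemma is where the real difficulty is hiding, and the step you offer for it does not work. First, a misstatement: $\ker D$ is \emph{not} $\pi_1(P_{2n})$; the paper itself records that ${\rm Ker}(D)/{\rm Gal}(\widetilde X_{2n}/P_{2n})=\{[\mathrm{id}],[\widetilde h^{\,n}]\}$, because the lift of the order-two rotation $R_{2n}^{n}$ has derivative $-I$ and hence trivial image in ${\rm PSL}(2,\mathbb{R})$ without being a deck transformation. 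Your argument really only needs the subgroup of elements with derivative exactly $+I$ to equal the deck group, so this is repairable, but the statement should be corrected.

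The genuine gap is the rigidity step. A lift $p_0$ of a cone point of $\varphi_{2n}$ is, in the metric completion of $\widetilde X_{2n}$, a cone point of \emph{infinite} angle (the completion of the infinite cyclic cover of a punctured cone). An isometry $\widetilde h$ fixing $p_0$ and satisfying $\mathrm{dev}\circ\widetilde h=\mathrm{dev}$ near $p_0$ need not be the identity there: rotation of the infinite cone by $2\pi k$ for any $k\in\mathbb{Z}$ also has derivative $I$, zero translation part, and fixes the tip, yet is nontrivial for $k\neq0$. So ``fixes $p_0$ and has vanishing translation vector'' does not yield ``identity on a neighborhood,'' and the open-closed argument never gets started. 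To exclude these extra rotations you must bring in the combinatorial structure of the tiling of $\widetilde X_{2n}$ by lifts of $\Pi_{2n}$ --- for instance, that $\widetilde h$ permutes the polygon corners incident to $p_0$, each of angle $(n-1)\pi/n$, while $2\pi$ is not an integer multiple of $(n-1)\pi/n$ when $n\geq4$ (and for $n$ odd a residual half-cone-angle rotation with derivative $+I$ still has to be ruled out separately). This is precisely the kind of input the paper's proof supplies explicitly, when it insists that the image of $Q_1$ under an affine map must be a parallelogram whose vertices are vertices of $2n$-gons with none in its interior. As written, your proof assumes away the hardest point; the overall descent strategy may well be salvageable, but only after this local analysis at the infinite-angle cone points is carried out.
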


For the proof of theorem, we represent $A \in {\rm SL}(2,\mathbb{R})$ by 
\begin{center}
$A=\left(
  \begin{array}{cccc}
    r\cos\alpha(A)  & s\cos\beta (A) \\
    r\sin\alpha(A)  & s\sin\beta (A)
  \end{array}
  \right)$
  \end{center}
for some $r,s >0$ and $\alpha(A) , \beta(A) $ with $0\leq \alpha(A)<\beta (A)< 2\pi$.
And set $\theta(A)=\beta(A)-\alpha(A)$. 
This $\theta(A)$ means the angle of 
$A{  \left(
  \begin{array}{cccc}
    1 \\
    0
  \end{array}
  \right)}$ and 
$A{  \left(
  \begin{array}{cccc}
    0 \\
    1
  \end{array}
  \right)}$.

The following two lemmas give the proof of the theorem.
\begin{lemma}\label{Lemma1}
For $[A]\in\Gamma(\widetilde  X_{2n})$ with $|\cot\theta (A)|>\cot\frac{\pi}{2n}$, 
there exists $k,\ l \in \mathbb{Z}$ such that $|\cot\theta (A)|> |\cot\theta (T_{2n}^{l}R_{2n}^{k}A)|$.
\end{lemma}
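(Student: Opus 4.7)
The plan is to prove the lemma by an explicit computation of $\cot\theta(T_{2n}^l R_{2n}^k A)$ as a function of $k$ and $l$, and to exhibit a reducing pair $(k, l)$ under the hypothesis. Write $\phi := \pi/n$, $c := 2\cot(\phi/2)$, and decompose $A$ via its column angles $\alpha, \beta$ and column lengths $r, s$ (so $rs \sin\theta(A) = 1$). Since $R_{2n}^k$ rotates each column by $k\phi$ (fixing $\theta$) and $T_{2n}^l$ is the horizontal shear $(x,y)\mapsto(x+lcy,y)$, expanding the inner product of the two columns of $T_{2n}^l R_{2n}^k A$ yields the identity
\[
\cot\theta(T_{2n}^l R_{2n}^k A) \;=\; \cot\theta(A) + \frac{lc\sin\sigma_k + \frac{1}{2} l^2 c^2(\cos\theta(A) - \cos\sigma_k)}{\sin\theta(A)},
\]
where $\sigma_k := \alpha + \beta + 2k\phi$.

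The reduction strategy runs as follows. The set $\{\sigma_k \bmod 2\pi : k \in \mathbb{Z}\}$ is a coset of $(2\pi/n)\mathbb{Z}$ in $\mathbb{R}/2\pi\mathbb{Z}$, so one can always choose $k$ placing $\sigma_k$ within $\phi$ of a prescribed target. For the case $\cot\theta(A) > \cot(\phi/2)$ (equivalently $\theta(A) < \phi/2$) I target $\sigma_k = 0$; for the case $\cot\theta(A) < -\cot(\phi/2)$ (so $\theta(A) > \pi - \phi/2$) I target $\sigma_k = \pi$. At the idealised targets the linear term vanishes, and with $l = 1$ the correction becomes $-\frac{1}{2} c^2\tan(\theta(A)/2)$ (resp.\ $+\frac{1}{2} c^2\cot(\theta(A)/2)$); a half-angle manipulation reduces the desired inequality $|\cot\theta(T_{2n}^l R_{2n}^k A)| < |\cot\theta(A)|$ to $\cot^2(\theta(A)/2) > 2\cot^2(\phi/2) + 1$ (after the substitution $\theta \mapsto \pi - \theta$ in the second case). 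This follows from the hypothesis: $\theta(A)/2 < \phi/4$ together with the double-angle identity $\cot(\phi/4) = \cot(\phi/2) + \sqrt{\cot^2(\phi/2) + 1}$ give $\cot^2(\theta(A)/2) > \cot^2(\phi/4) = 2\cot^2(\phi/2) + 1 + 2\cot(\phi/2)\sqrt{\cot^2(\phi/2) + 1}$.

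The main obstacle is handling the generic case when $\sigma_k$ lies only within $\phi$ of (not exactly at) $0$ or $\pi$, so that the linear-in-$l$ term in the correction does not vanish. I would resolve this by allowing both $l = \pm 1$, choosing the sign so that the linear contribution is negative when $\cot\theta(A) > 0$ and positive when $\cot\theta(A) < 0$, and then verifying via a perturbation estimate that the strict margin coming from $|\cot\theta(A)| > \cot(\pi/(2n))$ persists over the whole window $\sigma_k \in (-\phi, \phi]$ (respectively $(\pi - \phi, \pi + \phi]$). The careful bookkeeping of this perturbation, which exploits that the reduction at the idealised target is strict by a definite gap depending only on $n$, is the most delicate step in the argument; everything else is a direct trigonometric calculation.
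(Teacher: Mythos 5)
Your proposal is correct in substance but follows a genuinely different route from the paper's proof. The paper also rotates first and then shears, but it uses $R_{2n}^{k}$ to bring the \emph{first} column angle $\alpha(B)$ into $[0,\pi/2n)$ (or its mirror window) and then invokes the hypothesis $[A]\in\Gamma(\widetilde X_{2n})$: the affine map realizing $[B]$ carries the basic parallelogram $Q_1$ to a parallelogram whose vertices are vertices of $2n$-gons with none in its interior, which forces $\beta(B)\le\pi/2n$ as well; with both column angles pinned, $l\mapsto\cot\theta(T_{2n}^{l}B)$ is a linear or upward-opening quadratic whose axis lies left of $-1/2$ and whose value at $-1/2$ is positive, so some negative integer $l$ strictly shrinks $|\cot\theta|$. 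You instead normalize the \emph{sum} $\sigma_k=\alpha+\beta+2k\pi/n$ and use only $l=\pm1$, never using membership in the Veech group; you are therefore proving the stronger, purely matrix-level statement for every $A\in\mathrm{SL}(2,\mathbb{R})$ with $|\cot\theta(A)|>\cot\frac{\pi}{2n}$, which is legitimate and avoids the least transparent (geometric) step of the paper's argument, at the cost of a sharper trigonometric analysis. Your identity for $\cot\theta(T_{2n}^{l}R_{2n}^{k}A)$ and the computation at the idealized targets check out. One correction to the step you defer, though: it is not really a perturbation off the target. Taking $l=-\mathrm{sign}(\sin\sigma_k)$, the numerator of the correction is $g(\sigma)=-c|\sin\sigma|+\frac{1}{2}c^2(\cos\theta-\cos\sigma)$, and one finds $g(\pm\phi)=g(0)=-\frac{1}{2}c^2(1-\cos\theta)$ exactly, with $g$ strictly smaller in the interior of the window (minimum at $\sigma=\pm\phi/2$); so the correction is automatically at least as negative as the idealized value throughout the window, and the genuine issue is the \emph{opposite} bound, namely that it not overshoot below $-\cot\theta(A)$. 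Since $\max(-g)=-g(\phi/2)=2\cos(\phi/2)+2\cot^2(\phi/2)\bigl(\cos(\phi/2)-\cos\theta\bigr)$, the required inequality $-g<2\cos\theta$ is equivalent to $\cos\theta>\cos(\phi/2)$, i.e.\ to the case-(a) hypothesis $\theta(A)<\pi/2n$ — so the estimate does close, but the hypothesis enters on the overshoot side rather than where your sketch locates the delicacy. Case (b) then reduces to case (a) via your $\sigma\mapsto\sigma+\pi$ shift, which matches the paper's device of replacing the second column by its negative.
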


\begin{lemma}\label{Lemma2}
For $[A]\in\Gamma(\widetilde  X_{2n})$ with $|\cot\theta (A)|>\cot\frac{\pi}{2n}$, 
there exists $B \in \left<R_{2n},T_{2n}\right>$ such that $\cot\frac{\pi}{2n}\geq |\cot\theta (BA)|$.
\end{lemma}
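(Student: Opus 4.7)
The plan is a finite descent via iterated application of Lemma~\ref{Lemma1}. Setting $A_{0}:=A$, as long as $|\cot\theta(A_j)|>\cot\frac{\pi}{2n}$, Lemma~\ref{Lemma1} supplies integers $k_{j+1},l_{j+1}\in\mathbb{Z}$ such that $A_{j+1}:=T_{2n}^{l_{j+1}}R_{2n}^{k_{j+1}}A_j$ satisfies $|\cot\theta(A_{j+1})|<|\cot\theta(A_j)|$; each $A_j$ automatically lies in $\Gamma(\widetilde X_{2n})$ because $A\in\Gamma(\widetilde X_{2n})$ and $\langle R_{2n},T_{2n}\rangle\subseteq\Gamma(\widetilde X_{2n})$. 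Each $A_j$ then has the form $B_j A$ with $B_j:=T_{2n}^{l_j}R_{2n}^{k_j}\cdots T_{2n}^{l_1}R_{2n}^{k_1}\in\langle R_{2n},T_{2n}\rangle$, so at the first step where the hypothesis of Lemma~\ref{Lemma1} fails, $B:=B_j$ is the required element.

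The only substantive point is termination, which we argue by contradiction. If the iteration never halts, the $A_j$ are pairwise distinct (since $|\cot\theta|$ strictly decreases) and all lie in the left coset $\Gamma A$ of $\Gamma:=\langle R_{2n},T_{2n}\rangle$, with $|\cot\theta(A_j)|$ decreasing inside $(\cot\frac{\pi}{2n},|\cot\theta(A_0)|]$. Since $\Gamma$ is a Fuchsian $(n,\infty,\infty)$-triangle group and therefore discrete in ${\rm PSL}(2,\mathbb{R})$, the coset $\Gamma A$ is a discrete subset of ${\rm PSL}(2,\mathbb{R})$; to reach a contradiction it suffices to show that the $A_j$ all lie in some compact subset. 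Using Iwasawa coordinates, write $A_j=K_{\phi_j}D_{t_j}N_{u_j}$ where $K_{\phi_j}\in SO(2)$, $D_{t_j}={\rm diag}(t_j,t_j^{-1})$, and $N_{u_j}=\bigl(\begin{smallmatrix}1&u_j\\0&1\end{smallmatrix}\bigr)$; a direct computation of the inner product of the columns yields $\cot\theta(A_j)=t_j^{2}u_j$. By absorbing an additional power of $T_{2n}$ into $l_{j+1}$ when needed (which does not spoil the strict-decrease estimate), we may further arrange $u_j\in[-\cot\frac{\pi}{2n},\cot\frac{\pi}{2n}]$, the fundamental interval for $\langle T_{2n}\rangle$. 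Combined with $|t_j^{2}u_j|>\cot\frac{\pi}{2n}$, this already forces $t_j>1$ for every $j$.

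The main obstacle is the matching upper bound on $t_j$: keeping the $A_j$ inside a compact subset of ${\rm PSL}(2,\mathbb{R})$ requires ruling out $|u_j|\to 0$, equivalently preventing the $A_j$ from escaping toward a cusp of $\Gamma$. This is the exact analog of the ``$\operatorname{Im}\tau$ remains bounded'' step in the classical reduction algorithm for ${\rm SL}(2,\mathbb{Z})$-orbits on $\mathbb{H}$, and we expect it to be the technical core of the argument: the specific $(k_{j+1},l_{j+1})$ delivered by Lemma~\ref{Lemma1} must be shown to keep $|u_j|$ bounded below by a positive constant depending only on $A_0$. Once that is established, $t_j$ is bounded above as well, the $A_j$ lie in a compact subset of ${\rm PSL}(2,\mathbb{R})$, and a pairwise distinct sequence in the discrete set $\Gamma A$ cannot be contained in any compact set, yielding the desired contradiction.
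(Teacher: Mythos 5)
Your descent scheme and the reduction of termination to a ``compactness plus discreteness'' argument match the paper's overall strategy, but the proposal has a genuine gap --- one you yourself flag: you never establish that the sequence $\{A_j\}$ stays in a compact subset of ${\rm SL}(2,\mathbb{R})$, writing only that the required lower bound on $|u_j|$ ``must be shown'' and is ``the technical core.'' That core is exactly what the paper supplies, and it is not a routine estimate on the integers produced by Lemma~\ref{Lemma1}; it comes from the flat geometry. Since $[A_m]\in\Gamma(\widetilde X_{2n})$, there is an affine map $\widetilde f_m$ with $D(\widetilde f_m)=[A_m]$ carrying segments joining vertices of the $2n$-gon decomposition to segments of the same kind, so the two columns of $A_m$ are themselves difference vectors of such vertices and hence have lengths $r_m\geq 1$ and $s_m\geq 1$. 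Combined with $r_ms_m=1/\sin\theta(A_m)\leq 1/\sin\theta(A_1)$ (which holds because $|\cot\theta(A_m)|$ is decreasing), this traps $r_m$ and $s_m$ in $[1,\,1/\sin\theta(A_1)]$, and together with the boundedness of the angles $\alpha_m,\beta_m$ it yields a convergent subsequence; discreteness of $\left<R_{2n},T_{2n}\right>\cdot A$ then forces the subsequence to be eventually constant, contradicting strict decrease. Without some such geometric input your argument does not close: nothing in the statement of Lemma~\ref{Lemma1} alone prevents $t_j\to\infty$ while $t_j^{2}u_j$ decreases, i.e.\ escape to the cusp is exactly what you must exclude and have not.

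There is also a secondary error: the normalization ``absorb an additional power of $T_{2n}$ into $l_{j+1}$ to arrange $u_j\in[-\cot\frac{\pi}{2n},\cot\frac{\pi}{2n}]$'' does not work as stated. In the Iwasawa decomposition $A_j=K_{\phi_j}D_{t_j}N_{u_j}$ the parameter $u_j$ is translated by \emph{right} multiplication by a unipotent matrix, whereas your construction (and the conclusion $BA$ of the lemma) only permits multiplying by $T_{2n}$ on the \emph{left}, which changes $\phi_j$, $t_j$ and $u_j$ simultaneously. Consequently even your intermediate claim $t_j>1$ is not justified as written. The paper sidesteps Iwasawa coordinates entirely and works with the column lengths $r_m,s_m$ and angles $\alpha_m,\beta_m$, for which the needed bounds are immediate from the vertex-to-vertex observation above; if you want to keep your coordinates, you should derive the bounds on $(t_j,u_j)$ from that same geometric fact rather than from a normalization you cannot perform.
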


\begin{proof}[Proof of theorem \ref{main1}]
$\Gamma(P_{2n})\subseteq \Gamma(\widetilde X_{2n})$ is clear since $\Gamma(\widetilde X_{2n})$ is the universal Veech group of $P_{2n}$.
We show $\Gamma(\widetilde X_{2n})\subseteq \Gamma(P_{2n}) $.
By Lemma \ref{Lemma2}, for each $[A] \in \Gamma(\widetilde X_{2n})$, there exists 
$[B] \in \left< [R_{2n}],[T_{2n}]\right>$ such that
\begin{center}
$\cot\frac{\pi}{2n}\geq |\cot\theta (BA)|$.
\end{center}
If we map $Q_1$ of Figure \ref{4} by an affine transformation $BA$, the image is parallelogram whose vertices correspond to vertices of $2n$-gons and which has no such points in its interior.
Moreover,  it has the same area as $Q_1$ and each angle $\theta $ of its vertices satisfies $\pi/2n \leq \theta \leq \pi-\pi/2n$.
We can see that such parallelograms are only $Q_1, Q_2, Q_3$ and $Q_4$ of Figure \ref{4} up to the image of them by $[R_{2n}]$ and $[T_{2n}]$.
Then $BA$ is either
\begin{center}
$\left(
  \begin{array}{cccc}
   1 & 0\\
   0 & 1
  \end{array}
  \right)$, 
  $\left(
  \begin{array}{cccc}
   1 & \cot\frac{\pi}{2n}\\
   0 & 1
  \end{array}
  \right)$,
    $\left(
  \begin{array}{cccc}
   0 & \cot\frac{\pi}{2n}\\
   -\tan\frac{\pi}{2n} & 0
  \end{array}
  \right)$ or 
    $\left(
  \begin{array}{cccc}
   1 & \cot\frac{\pi}{2n}\\   
-\tan\frac{\pi}{2n} & 0
  \end{array}
  \right)$.
\end{center}
However, it does not happen except for the case that $AB$ is the identity 
 $I={ 
\left(
  \begin{array}{cccc}
    1 & 0 \\
    0 & 1 
  \end{array}
  \right)}$  
since every vertex of $2n$-gons must be mapped to a vertex.
Hence $BA=I$ and so $[A]=[B^{-1}] \in \left< [R_{2n}],[T_{2n}]\right>=\Gamma(P_{2n})$.
\end{proof}
\begin{figure}[h]
 \begin{center}
 \includegraphics*[keepaspectratio, scale=0.45]{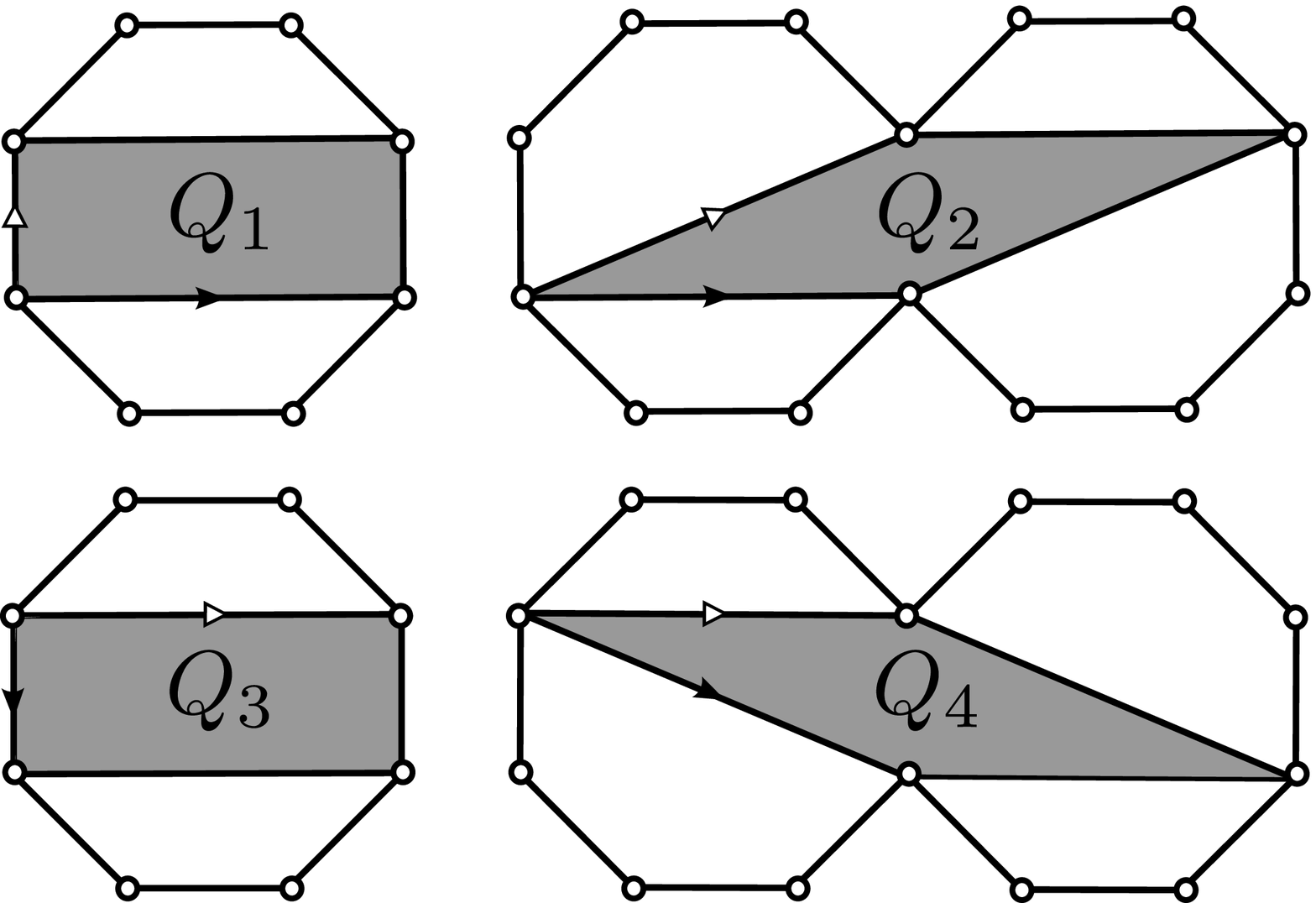}
\caption{}
\label{4}
 \end{center}
 \end{figure}

\begin{proof}[Proof of Lemma \ref{Lemma1}]
We consider two cases : (a) $\cot\theta (A)>\cot\frac{\pi}{2n}$ and (b) $-\cot\theta (A)>\cot\frac{\pi}{2n}$

{\bf Case (a)} : There exists $k\in \mathbb{Z}$ such that $B=R_{2n}^{k}A$ satisfies either $0\leq  \alpha(B)<\frac{\pi}{2n}$ or 
$\pi-\frac{\pi}{2n}\leq \alpha(B)<\pi$. 
We define the function 
\begin{eqnarray}
\nonumber
&F_{\alpha(B)}^{\beta (B)}(x)=&4\cot^2\frac{\pi}{2n}\cdot  \frac{\sin\beta (B)\sin\alpha (B)}{\sin(\beta (B)-\alpha (B))}\cdot x^2\\
\nonumber
&&+2\cot\frac{\pi}{2n}\cdot \frac{\sin(\beta (B)+\alpha (B))}{\sin(\beta (B)-\alpha (B))}\cdot x+\cot\theta (B)
\end{eqnarray}
of $x\in\mathbb{R}$. 
Note that $F_{\alpha(B)}^{\beta (B)}(l)=\cot\left(\beta (T_{2n}^{l}B)-\alpha (T_{2n}^{l}B)\right)=\cot\theta(T_{2n}^{l}B)$ for each $l \in \mathbb{Z}$. 

(a)-1 : If $0\leq  \alpha(B)<\frac{\pi}{2n}$, there exists $\widetilde f \in Aff^+(\widetilde X_{2n}, \widetilde \varphi_{2n})$ with $D(\widetilde f)=[B]$.
And  $\widetilde f$ maps the rectangle $Q_1$ of Figure \ref{4} to a parallelogram whose vertices correspond to vertices of $2n$-gons and which has no such points in its interior. Hence we have $0\leq  \alpha(B)<\beta (B) \leq \frac{\pi}{2n}$. 
From this, if $\alpha(B)=0$, then
\begin{equation}
\nonumber
F_0^{\beta (B)}(x)=2\cot\frac{\pi}{2n}\cdot x+\cot\theta (B)
\end{equation}
and
\begin{equation}
\nonumber
F_0^{\beta (B)}(-\frac{1}{2})=-\cot\frac{\pi}{2n}+\cot\theta (B)>0.
\end{equation}
So there exists a negative integer $l$ such that 
\begin{center}
$|F_{0}^{\beta (B)}(l)|< |F_{0}^{\beta (B)}(m)|$ \ for all $m \in \{0, -1,\cdot \cdot \cdot , l+1,l-1\}$.
\end{center}
 Now we have 
 \begin{center}
$|\cot\theta (A)|=|F_{0}^{\beta (B)}(0)|>|F_{0}^{\beta (B)}(l)|=|\cot\theta (T_{2n}^{l}R_{2n}^{k}A)|$.
\end{center}
If $0<\alpha(B)<\beta (B) \leq \frac{\pi}{2n}$, then $F_{\alpha(B)}^{\beta (B)}(x)$ is a  quadratic function of $x$ and the axis of $F_{\alpha(B)}^{\beta (B)}$ is 
\begin{equation}
\nonumber
x=-\frac{\cot\alpha (B)+\cot\beta  (B)}{4\cot\frac{\pi}{2n}}<-\frac{1}{2}
\end{equation}
and 
\begin{equation}
\nonumber
F_{\alpha(B)}^{\beta (B)}(-\frac{1}{2})>0.
\end{equation}
Hence there exists a negative integer $l$ such that 
\begin{center}
$|F_{\alpha(B)}^{\beta (B)}(l)|< |F_{\alpha(B)}^{\beta (B)}(m)|$ \ for all $m \in \{0, -1,\cdot \cdot \cdot , l+1,l-1\}$.
\end{center}
And we have $|\cot\theta (A)|=|F_{\alpha(B)}^{\beta (B)}(0)|> |F_{\alpha(B)}^{\beta (B)}(l)|=|\cot\theta (T_{2n}^{l}R_{2n}^{k}A)|$.\\

(a)-2 : If $\pi-\frac{\pi}{2n}\leq \alpha (B)<\pi$,  we have $\pi-\frac{\pi}{2n}\leq \alpha (B)< \beta(B) \leq \pi$
and  
\begin{equation}
\nonumber
F_{\alpha(B)}^{\beta (B)}(x)=F_{(\pi-\beta(B))}^{(\pi-\alpha (B))}(-x).
\end{equation}
By using the argument of (a)-1, we have $|\cot\theta (A)|>|\cot\theta (T_{2n}^{l}R_{2n}^{k}A)|$ for some $l\in \mathbb{Z}$.

{\bf Case (b)} :  We apply the same argument as in the Case (a) to the angle of two vectors $A{  \left(
  \begin{array}{cccc}
    1 \\
    0
  \end{array}
  \right)}$ and 
$A{  \left(
  \begin{array}{cccc}
    0 \\
    -1
  \end{array}
  \right)}$.
Then we have $|\cot\theta (A)|>|\cot\theta (T_{2n}^{l}R_{2n}^{k}A)|$ for some $k,l\in \mathbb{Z}$. 
\end{proof}

\begin{proof}[Proof of Lemma \ref{Lemma2}]
Let $[A]$ be an element in $\Gamma(\widetilde X_{2n})$ with $|\cot\theta (A)|>\cot\frac{\pi}{2n}$.
From the proof of Lemma \ref{Lemma1}, we obtain $A_1=T_{2n}^{l_1}R_{2n}^{k_1} A$ with $|\cot\theta (A_1)|<|\cot\theta (A)|$ for some $k_1\in \mathbb{Z}$ and $l_1 \in \mathbb{Z}-\{0\}$. 
If $|\cot\theta (A_1)|>\cot\frac{\pi}{2n}$, then we obtain  $A_2=T_{2n}^{l_2}R_{2n}^{k_2} A_1$ with $|\cot\theta (A_2)|<|\cot\theta (A_1)|$ for some $k_2,l_2 \in \mathbb{Z}-\{0\}$ from the proof of Lemma \ref{Lemma1}  again. 
We repeat this operation.
If there exists $m_0 \in  \mathbb{N}$ such that $\cot\frac{\pi}{2n}\geq |\cot\theta (A_{m_0})|$ holds, then $B=A_{m_0}A^{-1}$ is what we want.  
Suppose that $|\cot\theta (A_m)|>\cot\frac{\pi}{2n}$ holds for every $m \in \mathbb{N}$.
Then we have an infinite sequence $\{A_m\}$ in $\left<R_{2n},T_{2n}\right>\cdot A$ with  $|\cot\theta (A_{m-1})|>|\cot\theta (A_m)|>\cot\frac{\pi}{2n}$ for all $m$.
We represent $A_m$ by 
\begin{center}
$A_m=\left(
  \begin{array}{cccc}
    r_m\cos\alpha_m  & s_m\cos\beta_m \\
    r_m\sin\alpha_m  & s_m\sin\beta_m
  \end{array}
  \right)$
\end{center}
for some $r_m,s_m >0$ and $0\leq \alpha_m<\beta_m < 2\pi$ with $r_ms_m\sin(\beta_m-\alpha_m )=1$. 
For each $m$, there exists $\widetilde  f_m \in Aff^+(\widetilde X_{2n}, \widetilde \varphi_{2n})$ such that $D(\widetilde  f_m)=[A_m]$ and $\widetilde  f_m$ maps Euclidean segment which connect vertices of $2n$-gons to other segment. 
Thus we have 
\begin{center}
$r_m=\Big|
A_m  
\left(
  \begin{array}{cccc}
    1\\
    0
  \end{array}
  \right)
\Big| \geq 1$ 
and
$s_m=\Big|
A_m 
\left(
  \begin{array}{cccc}
    0\\
    1
  \end{array}
  \right)
\Big| \geq 1$.
  \end{center}
  Moreover, we have 
\begin{equation}
\nonumber
r_ms_m=\frac{1}{\sin(\beta_m-\alpha_m )}\leq \frac{1}{\sin(\beta_1-\alpha_1 )}.
\end{equation}
Hence $\{\alpha _m\}, \{\beta _m\}, \{r_m\}$ and $\{s_m\}$ are bounded and there exists a subsequence $\{A_{m_{i}}\}$ of $\{A_m\}$ such that $A_{m_{i}}$ converges to some $A_\infty \in {\rm SL}(2,\mathbb{R})$. 
Since $\{A_{m_i}\}$ is in a discrete set  $\left<R_{2n},T_{2n}\right> \cdot A$ , there exists $i_0 \in \mathbb{N}$ such that $A_{m_i}=A_\infty$ for all $i\geq i_0$.
However, this contradicts the construction of the sequence $\{A_m\}$. 
Hence there exists $m_0 \in \mathbb{N}$ such that $\cot\frac{\pi}{2n}\geq |\cot\theta (A_{m_0})|$.
\end{proof}

\section{Calculation of Veech groups}
Let $X$ be an unramified finite covering of $P_{2n}$.
By theorem \ref{main1}, we can write $\Gamma(X)$ as follows.
\begin{flushright}
$\Gamma(X)=$ {\Big \{} $[A]\in \left<[R_{2n}], [T_{2n}] \right>\ |\ \exists \widetilde f\in Aff^+(\widetilde X_{2n},\widetilde \varphi_{2n})\ s.t.\ D(\widetilde f)=[A], $ 
$\widetilde f_*({\rm Gal}(\widetilde X_{2n}/ X))={\rm Gal}(\widetilde X_{2n}/ X)$ {\Big \}}.
\end{flushright}

Let $z_0$ be the point of $P_{2n}$ which corresponds to the center of the $2n$-gon $\Pi_{2n}$ as in Example \ref{exampleofveech2} and $\overline z_0$ be one of the preimages of $z_0$ in $X$. 
Let $\{x_1, x_2, \cdots, x_n\}$ be the system of generators of $\pi_1(P_{2n}, z_0)$ as Figure \ref{5}.
Then $R_{2n}$ and $T_{2n}$ define the following  automorphisms $\gamma_{R_{2n}}$ and $\gamma_{T_{2n}}$ on $\pi_1(P_{2n}, z_0)$ (see Example \ref{exampleofveech2}).
\begin{eqnarray}
\gamma_{R_{2n}} :\left\{
\begin{array}{l}
\nonumber
x_i\mapsto x_{i+1}\ \ (i=1, 2, \cdots, n-1)\\
x_n\mapsto x_1^{-1}
\end{array}.
\right.
\end{eqnarray}
If $n$ is even, 
\begin{eqnarray}
\gamma_{T_{2n}} :\left\{
\begin{array}{l}
\nonumber
x_1\mapsto x_1 \\
x_{n+2-i}^{-1}x_{i}\mapsto x_{n+2-i}^{-1}x_{i}\ \ (i=2,3,\cdots , \frac{n}{2})\\
x_i\mapsto (x_{n+2-i}^{-1}x_{i})\cdots (x_{n-1}^{-1}x_{3})(x_{n}^{-1}x_{2})x_1^2 x_i \ \ (i=2,3,\cdots , \frac{n}{2})\\
x_{\frac{n}{2}+1}\mapsto (x_{\frac{n}{2}+2}^{-1}x_\frac{n}{2})\cdots (x_{n-1}^{-1}x_{3})(x_{n}^{-1}x_{2})x_1^2 x_{\frac{n}{2}+1}
\end{array}
\right.
\end{eqnarray}
and if $n$ is odd, 
\begin{eqnarray}
\gamma_{T_{2n}} :\left\{
\begin{array}{l}
\nonumber
x_{n+1-i}^{-1}x_{i}\mapsto x_{n+1-i}^{-1}x_{i}\ \ (i=1,2,\cdots , \frac{n-1}{2}).\\
x_i\mapsto (x_{n+1-i}^{-1}x_{i})\cdots (x_{n-1}^{-1}x_{2})(x_{n}^{-1}x_{1}) x_i \ \ (i=1,2,\cdots , \frac{n-1}{2})\\
x_\frac{n+1}{2}\mapsto (x_{\frac{n+3}{2}}^{-1}x_\frac{n-1}{2})\cdots (x_{n-1}^{-1}x_{2})(x_{n}^{-1}x_{1}) x_\frac{n+1}{2}.
\end{array}
\right.
\end{eqnarray}
Since ${\rm Gal}(\widetilde X_{2n}/ P_{2n}) < {\rm Ker}(D)$,  
${\rm Ker}(D)/{\rm Gal}(\widetilde X_{2n}/ P_{2n}) =\{ [id], [\widetilde h^{n}]\}$ for some $\widetilde h \in Aff^+(\widetilde X_{2n},\widetilde \varphi_{2n})$ with $D(\widetilde h)=[R_{2n}]$ and 
each element in ${\rm Gal}(\widetilde X_{2n}/ P_{2n})$ defines an inner automorphism of ${\rm Gal}(\widetilde X_{2n}/ P_{2n})\cong \pi_1(P_{2n}, z_0)$,  
the action of each element of $Aff^+(\widetilde X_{2n},\widetilde \varphi_{2n})$ on $\pi_1(P_{2n}, z_0)$ can be represented by a composition of $\gamma_{R_{2n}}$, $\gamma_{T_{2n}}$ and inner automorphisms of $\pi_1(P_{2n}, z_0)$.

\begin{figure}[h]
 \begin{center}
 \includegraphics*[keepaspectratio, scale=0.45]{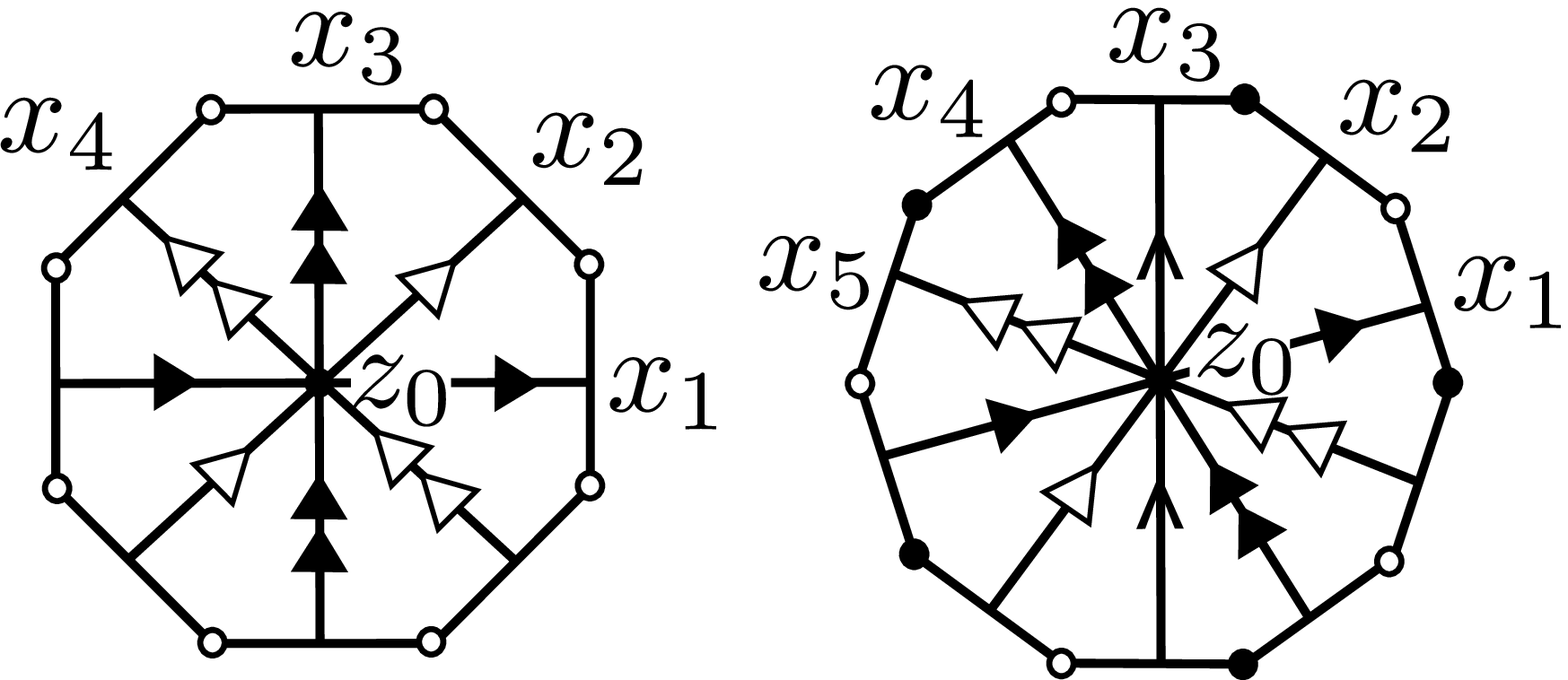}
\caption{}
\label{5}
 \end{center}
 \end{figure} 
 
Hence we have the following.
\begin{proposition}
For $\widetilde f \in Aff^+(\widetilde X_{2n}, \widetilde \varphi_{2n})$, following two are equivalent. 
Here $A$ is one of elements in $D(\widetilde f)$. 
\begin{itemize}
\item The mapping $\widetilde f$ satisfies $\widetilde f_*({\rm Gal}(\widetilde X_{2n}/ X))={\rm Gal}(\widetilde X_{2n}/ X)$.
\item There exists one of the preimages $\overline z_1 \in X$ of $z_0$ such that 
\begin{center}
 $\gamma_A(\pi_1(X, \overline z_0))=\pi_1(X, \overline z_1)$ or $\gamma_{-A}(\pi_1(X, \overline z_0))=\pi_1(X, \overline z_1)$. 
 \end{center}
\end{itemize}

\end{proposition}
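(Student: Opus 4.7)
The plan is to translate the condition on the deck group ${\rm Gal}(\widetilde X_{2n}/X)$ into the language of fundamental groups via the standard covering-space dictionary, and then read the action of $\widetilde f_*$ off the formulas for $\gamma_{R_{2n}}$ and $\gamma_{T_{2n}}$ that were already set up just above the proposition.

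First I would fix a lift $\widetilde z_0\in\widetilde X_{2n}$ of $\overline z_0$. This choice realizes a canonical isomorphism ${\rm Gal}(\widetilde X_{2n}/P_{2n})\cong \pi_1(P_{2n},z_0)$, under which ${\rm Gal}(\widetilde X_{2n}/X)$ is identified with the image of $\pi_1(X,\overline z_0)$ in $\pi_1(P_{2n},z_0)$. The basic fact from covering-space theory I would record is: as $\overline z_1$ ranges over the preimages of $z_0$ in $X$, the subgroups $\pi_1(X,\overline z_1)\leq\pi_1(P_{2n},z_0)$ sweep out precisely the conjugacy class of $\pi_1(X,\overline z_0)$; explicitly, $\pi_1(X,\overline z_1)=g^{-1}\pi_1(X,\overline z_0)g$ where $g$ is the homotopy class in $P_{2n}$ of the image of any path in $X$ from $\overline z_0$ to $\overline z_1$.

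Next I would invoke the paragraph preceding the proposition: conjugation by $\widetilde f$ on ${\rm Gal}(\widetilde X_{2n}/P_{2n})$, transported via the identification above, is the composition of $\gamma_A$ (or $\gamma_{-A}$) with an inner automorphism of $\pi_1(P_{2n},z_0)$. The two possible signs come from the two representatives of $[A]\in D(\widetilde f)\subseteq{\rm PGL}(2,\mathbb{R})$ and correspond to pre/post-composing $\widetilde f$ with the central element $\widetilde h^n$; the inner automorphism is the ambiguity coming from composition with elements of ${\rm Gal}(\widetilde X_{2n}/P_{2n})$. From this, the condition $\widetilde f_*({\rm Gal}(\widetilde X_{2n}/X))={\rm Gal}(\widetilde X_{2n}/X)$ becomes: there exists $g\in \pi_1(P_{2n},z_0)$ with $\gamma_A(\pi_1(X,\overline z_0))=g\,\pi_1(X,\overline z_0)\,g^{-1}$ or $\gamma_{-A}(\pi_1(X,\overline z_0))=g\,\pi_1(X,\overline z_0)\,g^{-1}$, which by the previous paragraph is exactly saying $\gamma_{\pm A}(\pi_1(X,\overline z_0))=\pi_1(X,\overline z_1)$ for the preimage $\overline z_1$ corresponding to the coset of $g^{-1}$. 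Both implications go through simultaneously, giving the claimed equivalence.

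The main obstacle I expect is the careful bookkeeping of the $\pm A$ ambiguity: since $D$ takes values in ${\rm PGL}(2,\mathbb{R})$ while the maps $\gamma_A$ are indexed by elements of ${\rm SL}(2,\mathbb{R})$, one must verify that replacing $\widetilde f$ by $\widetilde h^n\circ \widetilde f$ (the other lift of $[A]$ modulo ${\rm Gal}(\widetilde X_{2n}/P_{2n})$) changes the induced outer automorphism from (the class of) $\gamma_A$ to (the class of) $\gamma_{-A}$, and that this exhausts all lifts, so that the disjunction in the second bullet is forced rather than ad hoc. Everything else is a routine translation between deck transformations and $\pi_1$.
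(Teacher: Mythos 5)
Your proposal is correct and follows essentially the same route as the paper, which leaves this proposition's proof implicit in the paragraph immediately preceding it: the identification ${\rm Gal}(\widetilde X_{2n}/P_{2n})\cong\pi_1(P_{2n},z_0)$, the fact that $\widetilde f_*$ acts as $\gamma_{\pm A}$ up to inner automorphisms (the sign ambiguity coming from $\widetilde h^n$ and the inner ambiguity from deck transformations), and the standard correspondence between conjugates of $\pi_1(X,\overline z_0)$ and the subgroups $\pi_1(X,\overline z_1)$ for the various preimages $\overline z_1$ of $z_0$. Your explicit bookkeeping of the $\pm A$ ambiguity is a faithful elaboration of what the paper asserts without detail.
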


By using this condition, we can determine whether $[A]$ is in $\Gamma(X)$ or not for each $[A] \in \left<[R_{2n}], [T_{2n}] \right>$.

Now we can calculate the Veech group $\Gamma(X)$ of an unramified finite covering $X$ of $P_{2n}$ by using the following method. 
Schmith\"usen(\cite{Schmithusen04}) also use this method to the calculations of Veech groups of origamis. 
The calculation is done on the following tree which we explain below.
\begin{figure}[h]
 \begin{center}
 \includegraphics*[keepaspectratio, scale=0.5]{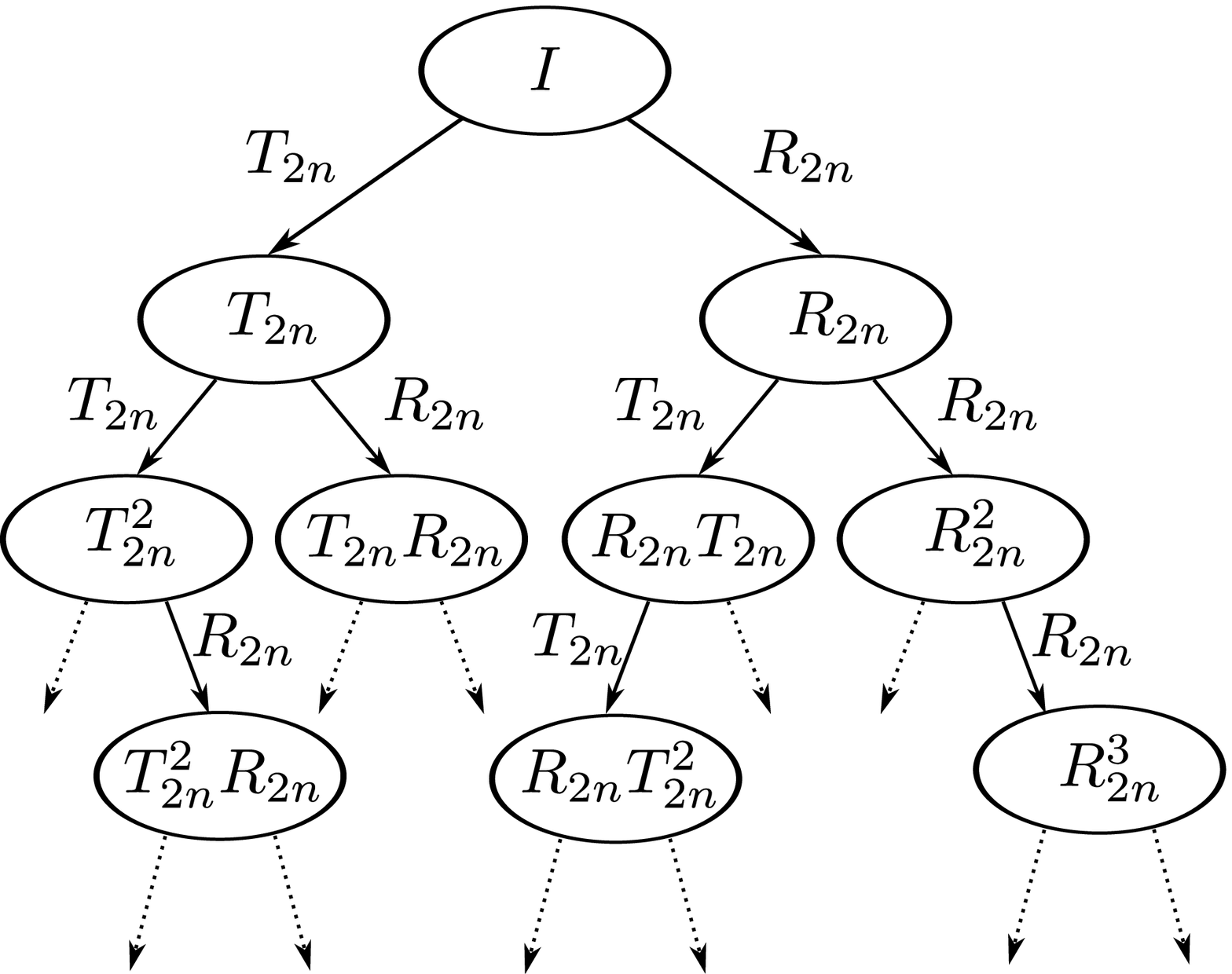}
 \label{Ract}
 \end{center}
\end{figure}

\ovalbox{
\begin{tabular}{l}
\underline {Calculation of $\Gamma(X)$}(Reidemeister-Schreier method).\\
\\
Given an unramified finite covering $X$ of $P_{2n}$.\\
Let {\bf Rep} and {\bf Gen} be empty sets.\\
Add $[I]$ to {\bf Rep}. Set $A=I$.\\
Loop:\\
Set $B=A\cdot T_{2n}$, $C=A\cdot R_{2n}$.\\
Check whether $B$ is already represented by {\bf Rep}:\\
For each $[D]$ in {\bf Rep}, check whether $[B]\cdot[ D]^{-1}$ is in $\Gamma(X)$.\\
If so, add $[B]\cdot [D]^{-1}$ to {\bf Gen}.\\
If none is found, add $[B]$ to {\bf Rep}.\\
Do the same for $C$ instead of $B$.\\
If there exists a successor of $A$ in {\bf Rep}, \\
let $A$ be this successor and go to the beginning of the loop.\\
If not, finish the loop.\\
\\
Result: \\
{\bf Gen} : a list of generators of $\Gamma(X)$.\\
{\bf Rep} : a list of coset representatives in $\left<[R_{2n}],[T_{2n}]\right>$.
\end{tabular}}

% \begin{remark}
% Let $X$ be an unramified finite covering of $P_{2n}$. 
% \end{remark}
\begin{proposition} \label{R-S-method}
% If this calculation stops, 
Let $X$ be an unramified finite covering of $P_{2n}$. Then we have the following properties.
\begin{enumerate} 
\item[(1)] Any two elements in {\bf Rep} belong to different cosets.
\item[(2)] The calculation stops in finitely many steps.
\item[(3)] In the end, each coset is represented by a member of {\bf Rep}.
\item[(4)] In the end, $\Gamma(X)$ is generated by the elements in {\bf Gen}.
\end{enumerate}  
\end{proposition}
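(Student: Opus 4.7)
The plan is to verify the four claims by combining bookkeeping arguments by induction on the algorithm's history, a finite-index argument based on the action on subgroups of $\pi_1(P_{2n}, z_0)$, and the classical Reidemeister-Schreier theorem. Part (1) is immediate by construction: {\bf Rep} is enlarged only at the step that adds $[B]$ precisely when $[B][D]^{-1} \notin \Gamma(X)$ for every $[D]$ already in {\bf Rep}, so by induction, distinct entries of {\bf Rep} lie in distinct right cosets of $\Gamma(X)$ in $\Gamma(P_{2n})$.

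For part (2), I first show $[\Gamma(P_{2n}) : \Gamma(X)] < \infty$. Let $d$ be the degree of $p : X \to P_{2n}$. Then ${\rm Gal}(\widetilde X_{2n}/X)$ is a subgroup of $\pi_1(P_{2n}, z_0)$ of index $d$, and the set $\mathcal{S}$ of index-$d$ subgroups of this finitely generated group is finite. By the discussion preceding the algorithm, $\Gamma(P_{2n})$ acts on the set of conjugacy classes in $\mathcal{S}$ through the automorphisms $\gamma_{R_{2n}}$ and $\gamma_{T_{2n}}$ (modulo inner automorphisms), and the preceding proposition identifies $\Gamma(X)$ with the stabilizer of the conjugacy class of $\pi_1(X, \overline z_0)$; so this stabilizer has finite index. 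Combined with (1), $|{\bf Rep}|$ is bounded by $[\Gamma(P_{2n}) : \Gamma(X)]$, and since each iteration of the loop either enlarges {\bf Rep} or advances $A$ to a previously unvisited successor, the algorithm halts. This is the single step that uses genuine geometric input beyond bookkeeping, and I expect it to be the main obstacle.

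For part (3), I would view the algorithm as a breadth-first traversal of the Schreier coset graph of $\Gamma(X) \backslash \Gamma(P_{2n})$ with edges given by right multiplication by $[R_{2n}]$ and $[T_{2n}]$. At termination, for every $[A] \in {\bf Rep}$ both $\Gamma(X)[A][R_{2n}]$ and $\Gamma(X)[A][T_{2n}]$ contain a member of {\bf Rep}. Since $\Gamma(P_{2n}) = \langle [R_{2n}], [T_{2n}] \rangle$ and both generators act as permutations on the finite coset set (so positive powers already realize their inverses there), every coset is reachable from the coset of $[I]$ by positive words in the generators and hence meets {\bf Rep}. For part (4), it then follows from (1) and (3) that {\bf Rep} is a right transversal of $\Gamma(X)$ in $\Gamma(P_{2n})$, and the Reidemeister-Schreier theorem asserts that $\Gamma(X)$ is generated by the elements $[A][s]\,\overline{[A][s]}^{-1}$, where $[A] \in {\bf Rep}$, $s \in \{R_{2n}, T_{2n}\}$, and $\overline{[A][s]}$ denotes the representative in {\bf Rep} of the coset $\Gamma(X)[A][s]$. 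These are precisely the elements that the algorithm places in {\bf Gen}.
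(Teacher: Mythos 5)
Your proposal is correct, and for parts (1), (3), (4) it matches what the paper intends: the paper declares (1) clear and defers (3) and (4) to Schmith\"usen, while you supply the standard justification (a traversal of the Schreier coset graph, the remark that the generators permute the finite coset set so their inverses are realized by positive powers, and the Reidemeister--Schreier generation theorem for the transversal {\bf Rep}). The genuine divergence is in part (2). The paper obtains finiteness of $[\Gamma(P_{2n}):\Gamma(X)]$ by quoting the Gutkin--Judge commensurability theorem stated immediately afterwards: the covering $p:X\to P_{2n}$ is compatible with the singular sets, so $\Gamma(X)$ and $\Gamma(P_{2n})$ are commensurable, and since $\Gamma(X)\subseteq\Gamma(P_{2n})$ the index is finite. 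You instead argue directly that the finitely generated group $\pi_1(P_{2n},z_0)$ has only finitely many subgroups of index $d=\deg p$, that $\left<[R_{2n}],[T_{2n}]\right>$ acts on the conjugacy classes of such subgroups through $\gamma_{R_{2n}},\gamma_{T_{2n}}$ modulo inner automorphisms, and that by the proposition preceding the algorithm $\Gamma(X)$ contains the stabilizer of the class of $\pi_1(X,\overline z_0)$, hence has finite index by orbit--stabilizer. This is more elementary and self-contained: it avoids the external geometric input of Gutkin--Judge and yields an explicit bound on $\sharp{\rm \bf Rep}$ by the number of conjugacy classes of index-$d$ subgroups, which is close in spirit to the bound $\sharp{\rm \bf Rep}\leq \sharp\,{\rm SL}(n,\mathbb{Z}_d)$ that the author later derives for Abelian coverings in Theorem \ref{main2}; what the paper's citation buys is brevity and a more general commensurability statement. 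The one point you should make explicit is that $[A]\mapsto\gamma_{\pm A}$ is well defined as an action on conjugacy classes of subgroups (this is exactly the content of the paper's discussion of ${\rm Ker}(D)/{\rm Gal}(\widetilde X_{2n}/P_{2n})$ and of inner automorphisms), but that is a presentational matter, not a gap.
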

\begin{proof}
(1) is clear and we can see a proof of (3), (4) in \cite{Schmithusen04}.
(2) is equivalent to what $\Gamma(X)$ is a finite index subgroup of $\left<[R_{2n}], [T_{2n}]\right>$.
By the next proposition , we conclude that $\Gamma(X)$ and $\Gamma(P_{2n})=\left<[R_{2n}], [T_{2n}]\right>$ are commensurable.
Hence $\Gamma(X)$ is a finite index subgroup of $\left<[R_{2n}],[T_{2n}]\right>$.
Since all elements in {\bf Rep} belong to different cosets of $\Gamma(X)$ in $\left<[R_{2n}], [T_{2n}]\right>$, 
% if $\Gamma(X)$ is a finite index subgroup of $\left<[R_{2n}], [T_{2n}]\right>$, then 
$\sharp{\rm \bf Rep}$ cannot be greater than this index and hence the calculation of $\Gamma(X)$ stops in finitely many steps.
\end{proof}

For a Riemann surface $X$ and a holomorphic quadratic differential $\varphi$, denote by $C(X, \varphi)$ the set of all zeros of $\varphi$ and punctures of $X$. 
\begin{proposition}$($\cite{GutJud96} and \cite{GutJud00}.$)$
Let $p: X \rightarrow Y$ be a covering mapping between  Riemann surfeces.
Let $\varphi_X$ be a holomorphic quadratic differential on $X$ and set $\varphi_Y=p_*\varphi_X$.
Suppose that $p(C(Y,\varphi_Y))=C(X,\varphi_X)$ and $p^{-1}(C(X,\varphi_X))=C(Y,\varphi_Y)$.
Then the Veech groups $\Gamma(X, \varphi_X)$ and $\Gamma(Y, \varphi_Y)$ are commensurable.
\end{proposition}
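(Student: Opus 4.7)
The plan is to exhibit inside each of the two affine groups $Aff^+(X,\varphi_X)$ and $Aff^+(Y,\varphi_Y)$ a subgroup of finite index so that these subgroups have the same image under the derivative map $D$; this will immediately give commensurability of $\Gamma(X,\varphi_X)$ and $\Gamma(Y,\varphi_Y)$. The two $C$-set hypotheses are exactly what is needed to promote $p$ to an honest unramified covering $p: X^\prime \to Y^\prime$ of the surfaces with singularities and punctures removed, so that $H := p_*\pi_1(X^\prime)$ is a finite-index subgroup of $\pi_1(Y^\prime)$.

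First I would show that $g \in Aff^+(Y,\varphi_Y)$ lifts to an element of $Aff^+(X,\varphi_X)$ if and only if the outer automorphism $g_*$ of $\pi_1(Y^\prime)$ preserves the conjugacy class of $H$. This is the direct analogue, on the flat-surface side, of the classical criterion for lifting a homeomorphism to a cover. Because $\pi_1(Y^\prime)$ is finitely generated and $H$ has finite index, M.~Hall's theorem produces only finitely many subgroups of $\pi_1(Y^\prime)$ of the same index as $H$, hence the set of outer automorphisms preserving the conjugacy class of $H$ has finite index in $\textrm{Out}(\pi_1(Y^\prime))$. It follows that the liftable affine maps form a finite-index subgroup $G_Y \leq Aff^+(Y,\varphi_Y)$.

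Symmetrically, an $f \in Aff^+(X,\varphi_X)$ descends to $Y$ if and only if its action on the relevant subgroup (the deck transformation group, in the Galois case, or more generally the conjugacy class of $H$ lifted to the universal cover) is preserved. The same finite-index argument produces a finite-index subgroup $G_X \leq Aff^+(X,\varphi_X)$ of descending elements. Lifting and descending are inverse operations modulo deck transformations of $X \to Y$, which lie in the kernel of $D$; consequently $D(G_X) = D(G_Y)$ as subgroups of $\textrm{PSL}(2,\mathbb{R})$, and this common image has finite index in each of $\Gamma(X,\varphi_X)$ and $\Gamma(Y,\varphi_Y)$.

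The main obstacle I expect is not the finiteness count, which is clean, but rather the verification that the fundamental-group criterion really governs liftability of \emph{affine} maps and that the lifted or descended map extends affinely across the marked points to a diffeomorphism of the full closed surface. This is precisely where the bijective correspondence between $C(X,\varphi_X)$ and $C(Y,\varphi_Y)$ is used: without it a puncture or cone point on one side could correspond to a smooth point on the other, obstructing both the local affine extension and the cleanliness of the covering $X^\prime \to Y^\prime$. Once that correspondence is in hand, the extension is forced by the fact that an affine diffeomorphism on the punctured surface preserves cone angles and local singularity types, so it extends uniquely to the marked points.
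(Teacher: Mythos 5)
The paper offers no proof of this proposition at all: it is imported verbatim from Gutkin--Judge (\cite{GutJud96}, \cite{GutJud00}), so there is no in-paper argument to compare yours against. Judged on its own, your lifting half is correct and is the standard route: with $H=p_*\pi_1(X')$ of finite index $n$ in $\pi_1(Y')$, the group $Aff^+(Y,\varphi_Y)$ acts on the finite set of (conjugacy classes of) index-$n$ subgroups of the finitely generated group $\pi_1(Y')$, the stabilizer of $[H]$ is the subgroup $G_Y$ of liftable elements, it has finite index, lifts are again affine because $p$ is a local isomorphism of flat structures away from the $C$-sets, and $D(G_Y)\subseteq \Gamma(X,\varphi_X)\cap\Gamma(Y,\varphi_Y)$ has finite index in $\Gamma(Y,\varphi_Y)$. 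Your closing remarks about extending across marked points and about deck transformations lying in $\ker D$ are also fine.

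The gap is the word ``symmetrically''. An $f\in Aff^+(X,\varphi_X)$ induces an automorphism of $\pi_1(X')\cong H$, not of $\pi_1(Y')$; after lifting to the universal cover, a lift $\widetilde f$ \emph{automatically} normalizes $H$, so ``$\widetilde f$ preserves the conjugacy class of $H$'' is vacuous and is not the descent criterion. The correct criterion is that some lift $\widetilde f$ normalizes the \emph{overgroup} $G=\pi_1(Y')\supseteq H$, and your finiteness count does not apply to it: the subgroup-counting argument bounds the number of finite-index subgroups of a fixed finitely generated group, whereas $\widetilde f G\widetilde f^{-1}$ ranges over overgroups of $H$ of index $n$ inside some ambient group of translations of the universal cover, and a finitely generated group can have infinitely many such overgroups in a large ambient group (already the trivial subgroup has infinitely many order-$2$ overgroups in the lamplighter group). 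To repair this you must first exhibit a canonical, finitely generated translation group $T$ with $T\supseteq G\supseteq H$ of finite index that is normalized by every lift $\widetilde f$ --- in Gutkin--Judge this is the deck group over the primitive surface that $Y$ balancedly covers, and in the present paper's setting it is essentially ${\rm Gal}(\widetilde X_{2n}/P_{2n})$ (or $\ker D$) --- and only then does $\widetilde f G\widetilde f^{-1}$ live among the finitely many subgroups of $T$ of index $[T:G]$, so that the descending elements form a finite-index subgroup of $Aff^+(X,\varphi_X)$. This step is where the balancedness hypotheses on $C(X,\varphi_X)$ and $C(Y,\varphi_Y)$ do their real work, and it is not a formal mirror image of the lifting direction.
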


% \begin{remark}
% By (2) and (3) of Proposition\ref{R-S-method}, 
% the calculation of $\Gamma(X)$ by the Reidemeister-Schreier method stops if and only if 
%  $\Gamma(X)$ is a finite index subgroup of $\left<[R_{2n}], [T_{2n}]\right>$.
% \end{remark}
\begin{example} Let $X$ be the covering of $P_8$ as Figure \ref{6}.
We calculate the Veech group $\Gamma(X)$.
\begin{figure}[h]
 \begin{center}
 \includegraphics*[keepaspectratio, scale=0.439]{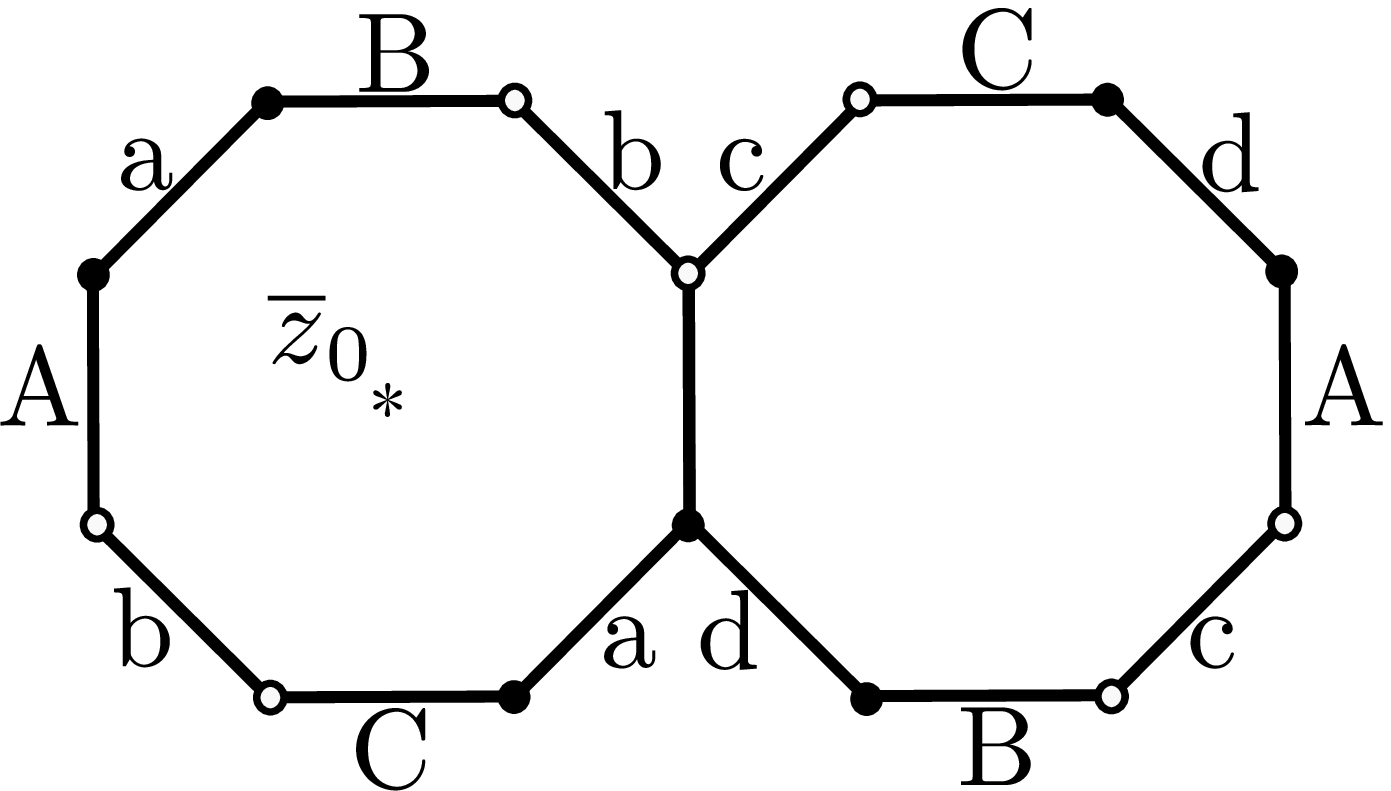}
\caption{}
\label{6}
 \end{center}
\end{figure}\\
The fundamental group of $X$ is 
\begin{equation}
\nonumber
\pi_1(X,\overline z_0)=\left< x_1^2\ ,\ x_2\ ,\ x_4\ ,\ x_1x_3\ ,\ x_3x_1\ ,\ x_1^{-1}x_2x_1\ ,\ x_1^{-1}x_4x_1 \right>.
\end{equation}
\begin{enumerate} 
\item[Loop 1 :] {\bf Rep}=$\{[I] \}$, {\bf Gen}=$\phi $, $A=I$, $B=T_8$, $C=R_8$. \\
We check $[B]\cdot [I]^{-1}=[T_8]$ $;$ 
the homomorphism $\gamma_{T_8}$ maps the generators of $\pi_1(X,\overline z_0)$ as follows
\begin{eqnarray}
\gamma_{T_8} :\left\{
\begin{array}{ll}
\nonumber 
x_1^2\mapsto x_1^2\\
x_2\mapsto x_4^{-1}x_2 x_1^2 x_2\\
x_4\mapsto x_4^{-1}x_2 x_1^2 x_4 & \\
x_1 x_3\mapsto x_1 x_4^{-1}x_2 x_1^2 x_3\\
x_3 x_1\mapsto x_4^{-1}x_2 x_1^2 x_3 x_1\\
x_1^{-1}x_2 x_1\mapsto x_1^{-1}x_4^{-1}x_2 x_1^2 x_2 x_1\\
x_1^{-1}x_4 x_1\mapsto x_1^{-1}x_4^{-1}x_2 x_1^2 x_4 x_1
\end{array}.
\right.
\end{eqnarray}
By taking $\overline z_0$ as a base point, all images represent closed curves. 
Hence $[T_8]$ is an element in $\Gamma(X)$ and add $[T_8]$ in {\bf Gen}.\\

Now {\bf Rep}=$\{[I] \}$, {\bf Gen}=$\{[T_8]\}$. \\
We check $[C]\cdot [I]^{-1}=[R_8]$ $;$ 
there is no point of $X$ such that $\gamma_{R_{2n}}(x_2)=x_3$ or $\gamma_{-R_{2n}}(x_2)=x_3^{-1}$ 
represent closed curves with the point as a base point. 
Hence $[R_8]$ is not in $\Gamma(X)$. We add $[R_8]$ in {\bf Rep}.\\

Now {\bf Rep}=$\{[I], [R_8] \}$, {\bf Gen}=$\{[T_8]\}$ and $R_8$ is a successor of $A=I$ and is in {\bf Rep}. We set $A=R_8$. 
\item[Loop 2 :] {\bf Rep}=$\{[I], [R_8] \}$, {\bf Gen}=$\{[T_8]\}$, $A=R_8$, $B=R_8T_8$, $C=R_8^2$. \\
We check $[B]\cdot [I]^{-1}=[R_8T_8]$ $;$ it is not in $\Gamma(X)$.\\
We check $[B]\cdot [R_8]^{-1}=[R_8T_8R_8^{-1}]$ $;$ the homomorphism $\gamma_{R_8T_8R_8^{-1}}$ is the form 
\begin{eqnarray}
\gamma_{R_8 T_8 R_8^{-1}} :\left\{
\begin{array}{l}
\nonumber
x_1\mapsto x_1 x_2^{-2} x_3^{-1} x_1^{-1}\\
x_2\mapsto x_2\\
x_3\mapsto x_1 x_3 x_2^2 x_3\\
x_4\mapsto x_1 x_3 x_2^2 x_4
\end{array}
\right.
\end{eqnarray}
and maps the generators of $\pi_1(X,\overline z_0)$ as follows
\begin{eqnarray}
\gamma_{R_8 T_8 R_8^{-1}} :\left\{
\begin{array}{ll}
\nonumber 
x_1^2\mapsto  x_1 x_2^{-2} x_3^{-1} x_2^{-2} x_3^{-1} x_1^{-1}\\
x_2\mapsto x_2 \\
x_4\mapsto x_1 x_3 x_2^2 x_4\\
x_1 x_3\mapsto x_1 x_3\\
x_3 x_1\mapsto x_1 x_3 x_2^2 x_3 x_1 x_2^{-2} x_3^{-1} x_1^{-1}\\
x_1^{-1} x_2 x_1\mapsto x_1 x_3 x_2^2 x_1^{-1} x_2 x_1 x_2^{-2} x_3^{-1} x_1^{-1}\\
x_1^{-1} x_4 x_1\mapsto x_1 x_3 x_2^2 x_3 x_2^2 x_4 x_1 x_2^{-2} x_3^{-1} x_1^{-1}
 \end{array}.
\right.
\end{eqnarray}
By taking $\overline z_0$ as a base point, all images represent closed curves. 
Hence $[R_8T_8R_8^{-1}]$ is an element in $\Gamma(X)$ and add $[R_8T_8R_8^{-1}]$ in {\bf Gen}.\\

Now {\bf Rep}=$\{[I], [R_8] \}$, {\bf Gen}=$\{[T_8], [R_8T_8R_8^{-1}]\}$. \\
We check $[C]\cdot [I]^{-1}=[R_8^2]$ $;$
the homomorphism $\gamma_{R_8}$ maps the generators of $\pi_1(X,\overline z_0)$ as follows
\begin{eqnarray}
\gamma_{R_8^2} :\left\{
\begin{array}{ll}
\nonumber 
x_1^2\mapsto x_3^2 \\ 
x_2\mapsto x_4 & \\
x_4\mapsto x_2^{-1} \\
x_1 x_3\mapsto x_3 x_1^{-1}\\
x_3 x_1\mapsto x_1^{-1} x_3\\
x_1^{-1} x_2 x_1\mapsto x_3^{-1} x_4 x_3\\
x_1^{-1} x_4 x_1\mapsto x_3^{-1} x_2^{-1} x_3
\end{array}.
\right.
\end{eqnarray}
By taking $\overline z_0$ as a base point, all images represent closed curves. 
Hence $[R_8^2]$ is an element in $\Gamma(X)$ and add $[R_8^2]$ in {\bf Gen}.\\
We check $[C]\cdot [R_8]^{-1}=[R_8]$ $;$ it is not in $\Gamma(X)$.\\

Now,  {\bf Rep}=$\{[I], [R_8] \}$, {\bf Gen}=$\{[T_8], [R_8T_8R_8^{-1}], [R_8^2]\}$ and there is no successor of $A=R_8$ in {\bf Rep}. We finish the loop.
\end{enumerate} 

Result $:$ {\bf Rep}=$\{[I], [R_8] \}$, {\bf Gen}=$\{[T_8], [R_8T_8R_8^{-1}], [R_8^2]\}$.\\
As a result, $\Gamma(X)=\left< [T_8], [R_8T_8R_8^{-1}], [R_8^2] \right>$ and coset representatives in  $\left<[R_8],[T_8]\right>$ is $\{[I], [R_8] \}$.\\
\end{example}

\begin{remark}

In the case of origamis, Schmith\"usen showed that the calculations always stop 
by connecting the Veech groups of origami with subgroups of  ${\rm SL}(2, \mathbb{Z})$(see \cite{Schmithusen04}).
In our case, for certain Abelian coverings of $2n$-gons, we connect the Veech groups with subgroups of ${\rm SL}(n,\mathbb{Z}_d)$ and calculate the Veech groups by using the corresponding matrices.
It is seen in section \ref{Abel}.
\end{remark}
% In our case, for elements in Veech groups of Abelian coverings of $2n$-gons whose degree is $d$, we connect them 
% with elements in ${\rm SL}(n, \mathbb{Z}_d)$.
% And we show that the calculations of Veech groups of certain Abelian coverings 
% can be done by using the corresponding matrices. 

\section{Calculation of $\mathbb{H}/\Gamma(X)$}
Let $X$ be an unramified finite covering of $P_{2n}$. 
Assume that the calculation of $\Gamma(X)$ by the Reidemeister-Schreier method stopped. 
Then  {\bf Gen} is a list of generators of $\Gamma(X)$ and {\bf Rep} is a list of coset representatives in $\left<[R_{2n}],[T_{2n}]\right>$.

Let $D$ be the fundamental domain of $\left<[R_{2n}],[T_{2n}]\right>$ in $\mathbb{H}$ as Figure \ref{7}. 
Then \begin{center}
$F \displaystyle={\rm Int} \Bigl(\bigcup_{[A]\in {\bf Rep}}[A](\overline D)\Bigr)$
\end{center}
is a fundamental domain of $\Gamma(X)$. 
Here $[A]$ means a M\"obius transformation.

\begin{figure}[h]
 \begin{center}
 \includegraphics*[keepaspectratio, scale=0.5]{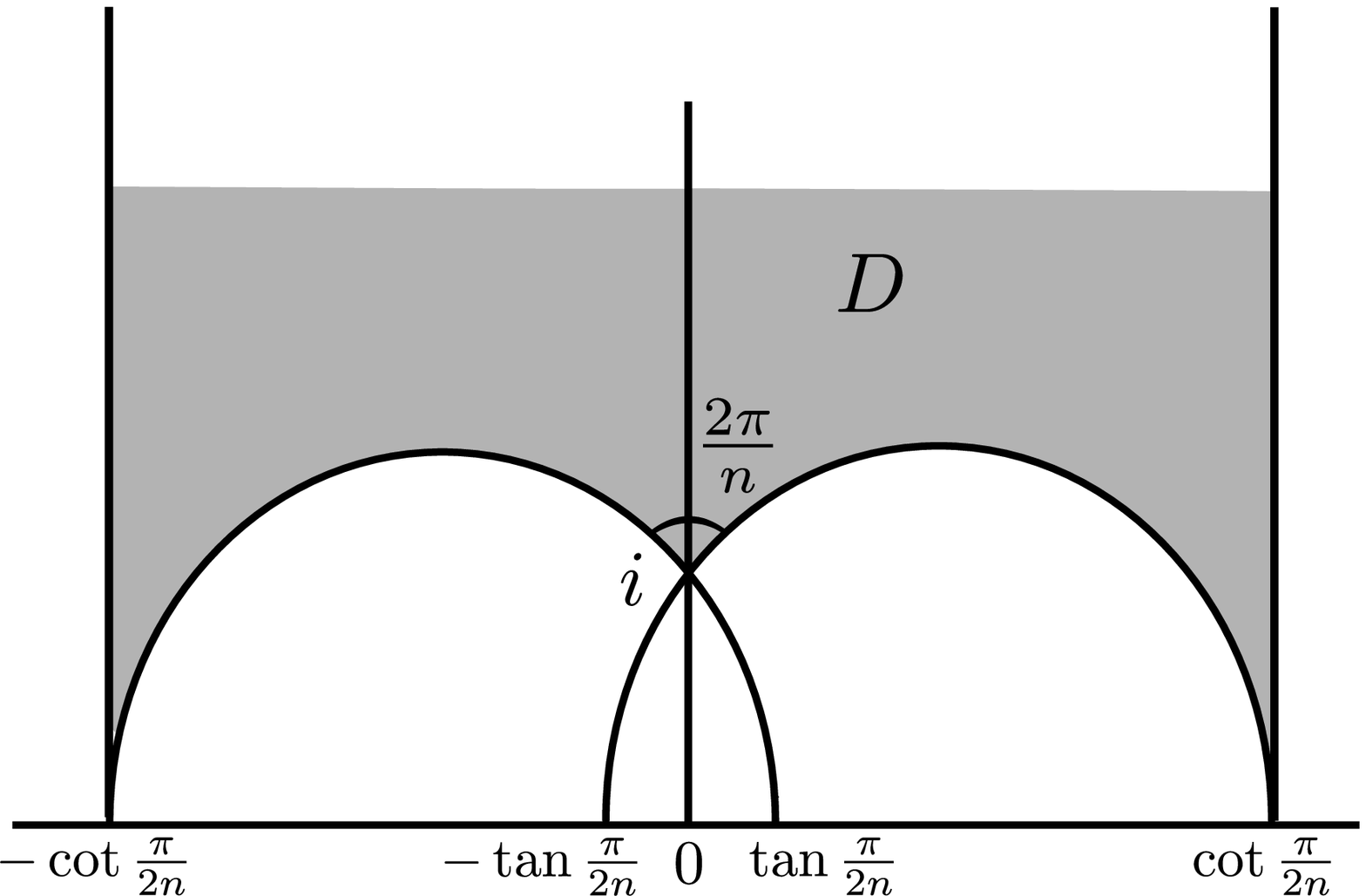}
\caption{}
\label{7}
 \end{center}
\end{figure}
By reading {\bf Gen},  we can know all pairs of sides of $F$ which are identified by the action of $\Gamma(X)$. 
We call sides of $[A](D)$ which correspond to $(-\cot \frac{\pi}{2n},i)$, $(\cot\frac{\pi}{2n},i)$, $(-\cot\frac{\pi}{2n},i\infty)$ and $(\cot\frac{\pi}{2n},i\infty)$
the $R^{-1}$-side, the  $R$-side, the $T^{-1}$-side and the $T$-side of $[A]$, respectively.
\begin{proposition}\label{side}
Assume that ${\rm \bf Rep}=\{[A_1], [A_2], \cdot \cdot \cdot , [A_k]\}$. 
Then for each $i, j \in \{1,2, \cdot \cdot \cdot k\}$,
\begin{itemize}
\item The $T$-side of $[A_j]$ and the $T^{-1}$-side of $[A_i]$ are identified if and only if $[A_jT_{2n}A_i^{-1}] \in \Gamma(X)$.
\item The $R$-side of $[A_j]$ and the $R^{-1}$-side of $[A_i]$ are identified if and only if $[A_jR_{2n}A_i^{-1}] \in \Gamma(X)$.
\end{itemize}
\end{proposition}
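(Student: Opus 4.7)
The plan is to exploit two geometric facts about the fundamental domain $D$ for $\langle [R_{2n}], [T_{2n}]\rangle$. First, the generator $[T_{2n}]$, being the horizontal translation $z \mapsto z + 2\cot\frac{\pi}{2n}$, sends the $T^{-1}$-side of $D$ to the $T$-side of $D$; similarly $[R_{2n}]$, being the rotation of $\mathbb{H}$ around $i$, sends the $R^{-1}$-side of $D$ to the $R$-side of $D$. Second, the tessellation $\{[B](\overline D)\}_{[B]\in\langle[R_{2n}],[T_{2n}]\rangle}$ of $\mathbb{H}$ is rigid: distinct group elements produce tiles with disjoint interiors, so a tile uniquely determines the element carrying $D$ to it.

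For the forward direction of the first bullet, assume $[G]:=[A_j T_{2n} A_i^{-1}]\in\Gamma(X)$. Then $[G\cdot A_i]=[A_j T_{2n}]$, so $[G]$ maps the tile $[A_i](D)$ onto the tile $[A_j T_{2n}](D)=[A_j]([T_{2n}](D))$. By the first geometric fact, the $T^{-1}$-side of $[A_j T_{2n}](D)$ coincides with the $T$-side of $[A_j](D)$, so $[G]$ carries the $T^{-1}$-side of $[A_i](D)$ onto the $T$-side of $[A_j](D)$. These two sides of $F$ are therefore identified under the action of $\Gamma(X)$. Conversely, if the two sides are identified in $F$, there is some $[G]\in\Gamma(X)$ realizing the identification, and then $[G\cdot A_i](D)$ is a tile of the tessellation whose $T^{-1}$-side equals the $T$-side of $[A_j](D)$, which is also the $T^{-1}$-side of $[A_j T_{2n}](D)$. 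Rigidity forces $[G\cdot A_i]=[A_j T_{2n}]$, hence $[G]=[A_j T_{2n} A_i^{-1}]\in\Gamma(X)$.

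The second bullet is formally identical, with $[T_{2n}]$ replaced by $[R_{2n}]$ throughout and the roles of the $T$-sides and $R$-sides swapped, using that $[R_{2n}]$ sends the $R^{-1}$-side of $D$ to the $R$-side of $D$. The only point of care is the rigidity step: if $[G\cdot A_i](D)$ and $[A_j T_{2n}](D)$ coincide as subsets of $\mathbb{H}$, then the two group elements coincide. This is immediate since $D$ is a strict fundamental domain for $\langle[R_{2n}],[T_{2n}]\rangle$, so I do not expect any real obstacle; the proposition is essentially a clean translation of the tile-adjacency structure of $F$ into an algebraic condition in $\langle[R_{2n}],[T_{2n}]\rangle$.
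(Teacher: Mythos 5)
Your argument is correct: the two facts you isolate (that $T_{2n}$ and $R_{2n}$ carry the $T^{-1}$- and $R^{-1}$-sides of $D$ to the $T$- and $R$-sides respectively, and that the tessellation by $\left<[R_{2n}],[T_{2n}]\right>$-translates of $D$ is rigid, so a tile and its labelled sides determine the group element) are exactly what is needed, and the edge-adjacency bookkeeping in both directions is sound. The paper states Proposition \ref{side} without any proof, so your write-up simply supplies the standard justification the author omitted; there is nothing to compare it against.
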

We give a triangulation of $\mathbb{H}/\Gamma(X)$ by decomposing $D$ as Figure \ref{8}.
Then the number of triangles and sides are $2\cdot \sharp{\rm \bf Rep}$ and $3\cdot \sharp{\rm \bf Rep}$, respectively.
Moreover, we can calculate the number $v$ of vertices by using Proposition \ref{side}.
When we calculate $v$, we decompose $v$ as $v=v_\infty+v_{\cot}+v_{\rm cone}$. 
Here $v_\infty$ is the number of vertices corresponding to $\infty$ of $D$, 
$v_{\cot}$ is the number of vertices corresponding to $\pm \cot \frac{\pi}{2n}$ of $D$ and
$v_{\rm cone}$ is the number of vertices corresponding to $i$ of $D$.
Then $\mathbb{H}/\Gamma(X)$ has genus $(2+ \sharp{\rm \bf Rep}-v)/2$ and $v_\infty+v_{\cot}$ punctures.
We can also calculate the number of cone points and their orders in the calculation of $v_{\rm cone}$. 
\begin{figure}[h]
 \begin{center}
 \includegraphics*[keepaspectratio, scale=0.5]{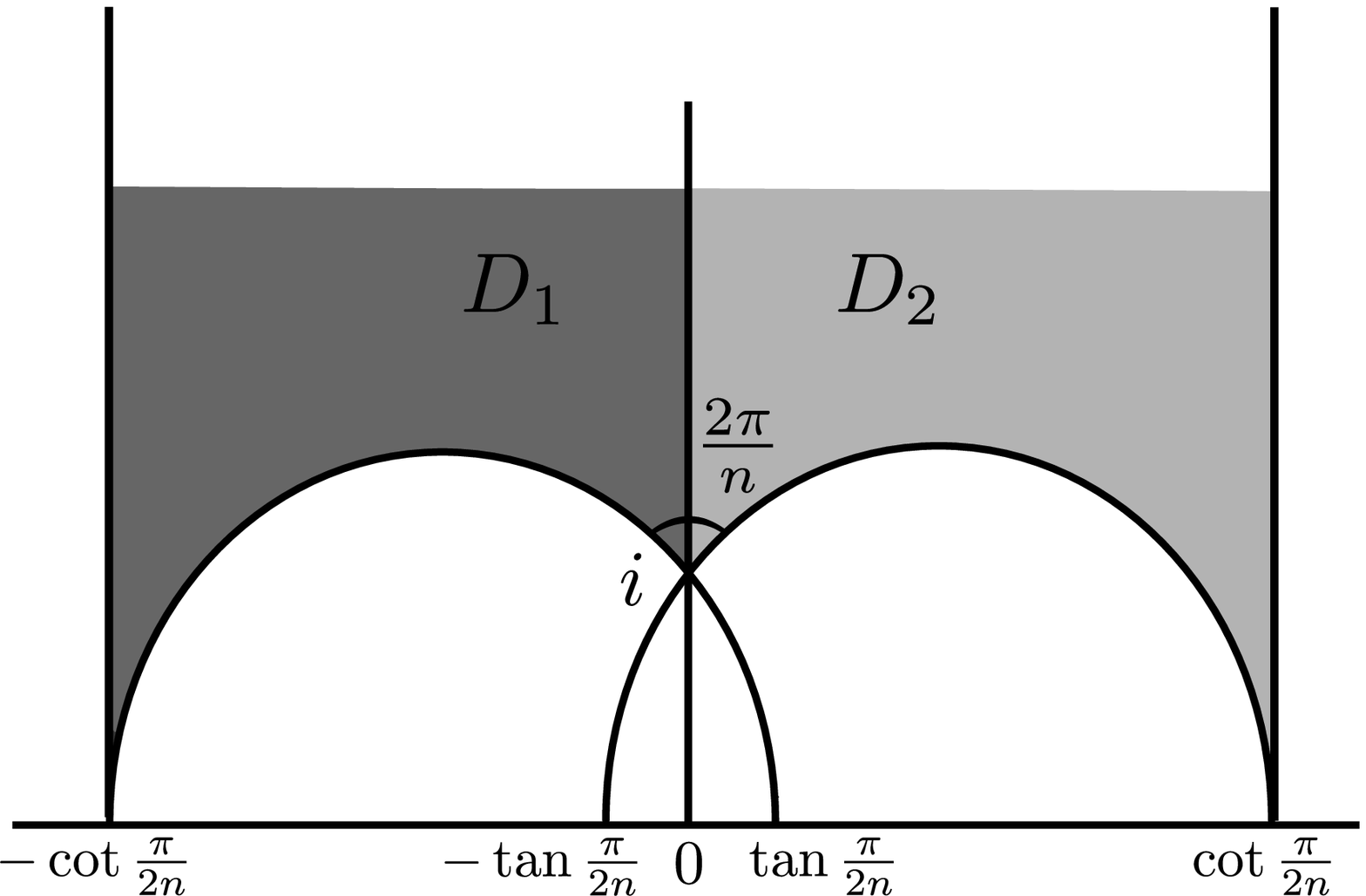}
\caption{}
\label{8}
 \end{center}
\end{figure}
\begin{example}
Let $X$ be the Riemann surface as Figure \ref{6}.
At the end of the calculation of $\Gamma(X)$, 
we have {\bf Gen}=$\{[T_8], [R_8T_8R_8^{-1}], [R_8^2]\}$ and {\bf Rep}=$\{[I], [R_8] \}$.
Since $[T_8]=[I\cdot T_8\cdot I^{-1}]$ is in $\Gamma(X)$, the $T$-side of $[I]$ and the $T^{-1}$-side of $[I]$ are identified by $\Gamma(X)$.
In the same way  the $T$-side of $[R_8]$ and the $T^{-1}$-side of $[R_8]$ are identified and 
 the $R$-side of $[R_8]$ and the $R^{-1}$-side of $[I]$ are identified. 
Hence $\mathbb{H}/\Gamma(X)$ has no genus, three punctures and one cone point with order  2 (see Figure \ref{9}).
\begin{figure}[h]
 \begin{center}
 \includegraphics*[keepaspectratio, scale=0.35]{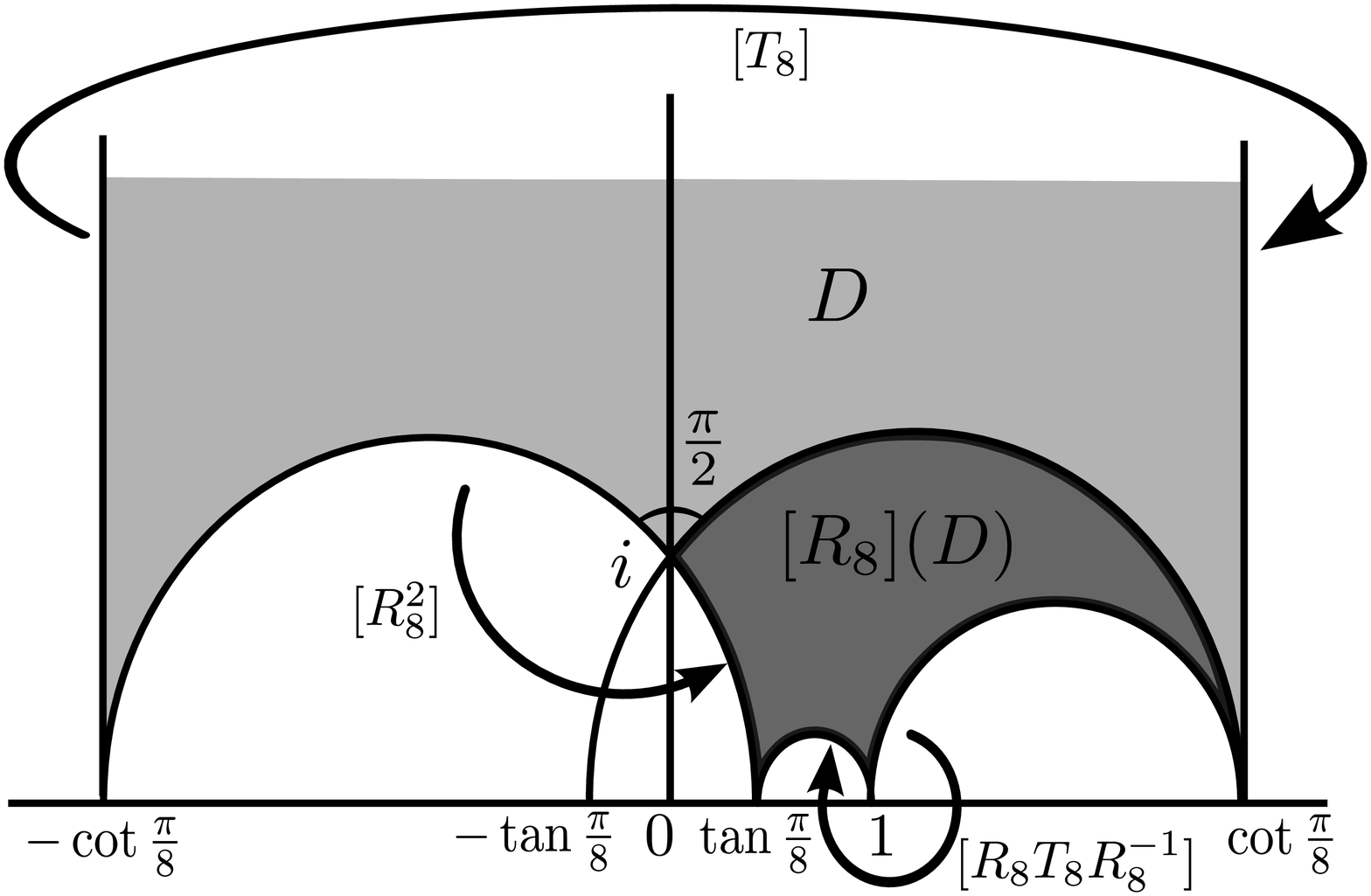}
\caption{}
\label{9}
 \end{center}
\end{figure}
\end{example}

\section{Veech groups of Abelian coverings} \label{Abel}
In this section, we show that the calculation of Veech group $\Gamma(X)$ by the Reidemeister-Schreier method always stops if $X$ is a finite Abelian covering of $P_{2n}$.
And we show that the calculations of
Veech groups of certain Abelian coverings can be done by using the corresponding
subgroups of ${\rm SL} (n, \mathbb{Z}_d)$.

Recall that if $\Gamma(X)$ is a finite index subgroup of $\left<[R_{2n}], [T_{2n}]\right>$, then the calculation of $\Gamma(X)$ stops by the proof of  Proposition \ref{R-S-method}.
% First, we consider a necessary and sufficient condition for the stop of the calculations.\\
% By the Proposition \ref{R-S-method},
% Since all elements in {\bf Rep} belong to different cosets of $\Gamma(X)$ in $\left<[R_{2n}], [T_{2n}]\right>$, 
%  if $\Gamma(X)$ is a finite index subgroup of $\left<[R_{2n}], [T_{2n}]\right>$, %then $\sharp{\rm \bf Rep}$ cannot be greater than this index and hence 
% the calculation of $\Gamma(X)$ stops.
% Conversely, suppose that the calculation of $\Gamma(X)$ stopped. 
% Then $\Gamma(X)$ is a finite index subgroup of $\left<[R_{2n}], [T_{2n}]\right>$ from proposition \ref{R-S-method}.
% \begin{proposition}
% Let $X$ be an unramified finite covering of $P_{2n}$. 
% The calculation of $\Gamma(X)$ by the Reidemeister-Schreier method stops if and only if 
%  $\Gamma(X)$ is a finite index subgroup of $\left<[R_{2n}], [T_{2n}]\right>$.
% \end{proposition}
We have a partial answer about the stop of calculations.

\begin{theorem} \label{main2}
Let $X$ be a finite Abelian covering of $P_{2n}$, that is, $X$ is a finite Galois covering of $P_{2n}$ and ${\rm Gal}(X/P_{2n})$ is an Abelian group.
Then the calculation of $\Gamma(X)$ stops.
\end{theorem}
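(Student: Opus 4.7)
Plan: By the proof of Proposition \ref{R-S-method}, it suffices to show that $\Gamma(X)$ has finite index in $\Gamma(P_{2n})=\langle [R_{2n}],[T_{2n}]\rangle$. Rather than appealing to Gutkin--Judge commensurability (whose hypotheses on zeros and punctures need not hold for an arbitrary Abelian cover), I plan to use the Abelian hypothesis directly: it lets one replace the condition defining $\Gamma(X)$ by preservation of a submodule of $\mathbb{Z}_d^n$, and since the corresponding linear action factors through a finite group, finiteness of the index is automatic.

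Let $\pi=\pi_1(P_{2n}, z_0)$ (free of rank $n$, since $P_{2n}$ is a punctured surface) and $H={\rm Gal}(\widetilde X_{2n}/X)$, a normal subgroup of $\pi$ with finite Abelian quotient $G:=\pi/H$. Let $d$ be the exponent of $G$; then the quotient $\pi \to G$ factors through the mod-$d$ abelianization $\pi \to H_1(P_{2n},\mathbb{Z}_d)\cong \mathbb{Z}_d^n$, and $H$ corresponds to a submodule $\bar L \subseteq \mathbb{Z}_d^n$. Let $N$ be the kernel of this abelianization; it is characteristic in $\pi$ and contained in $H$. Any automorphism $\gamma$ of $\pi$ therefore descends to $\bar\gamma \in {\rm GL}(n,\mathbb{Z}_d)$, and when $\bar\gamma={\rm id}$, $\gamma$ stabilizes every subgroup of $\pi$ containing $N$, so $\gamma(H)=H$.

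Next, fix lifts $\widetilde R,\widetilde T \in {\rm Aff}^+(\widetilde X_{2n},\widetilde \varphi_{2n})$ of $[R_{2n}],[T_{2n}]$, set $\widetilde\Gamma=\langle \widetilde R,\widetilde T\rangle$, and observe that by Theorem \ref{main1}, $D(\widetilde\Gamma)=\Gamma(P_{2n})$ with $D|_{\widetilde\Gamma}$ of finite kernel. Each $\widetilde g \in \widetilde\Gamma$ acts on $\pi$, up to an inner automorphism coming from ${\rm Gal}(\widetilde X_{2n}/P_{2n})$, by a word in $\gamma_{R_{2n}}^{\pm 1},\gamma_{T_{2n}}^{\pm 1}$ (cf.\ Section~4); since inner automorphisms act trivially on $H_1(P_{2n},\mathbb{Z}_d)$, composition with the reduction mod $d$ yields a well-defined homomorphism $\rho:\widetilde\Gamma \to {\rm GL}(n,\mathbb{Z}_d)$. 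Because ${\rm GL}(n,\mathbb{Z}_d)$ is finite, $K=\ker\rho$ has finite index in $\widetilde\Gamma$, and by the previous paragraph every $\widetilde g \in K$ satisfies $\widetilde g_*(H)=H$, whence $D(\widetilde g)\in \Gamma(X)$. Consequently $D(K)$ is a finite-index subgroup of $\Gamma(P_{2n})$ contained in $\Gamma(X)$, so $\Gamma(X)$ itself has finite index in $\Gamma(P_{2n})$ and the Reidemeister--Schreier procedure terminates.

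The delicate step is verifying cleanly that $\rho$ is a genuine homomorphism: the ``$\gamma_A$ or $\gamma_{-A}$'' formulation of the Proposition in Section~4 reflects a sign ambiguity introduced by passing from a lift to its class in ${\rm PSL}(2,\mathbb{R})$, whereas here one must work with the specific lifts $\widetilde R,\widetilde T$ themselves so that compositions in $\widetilde\Gamma$ correspond to unambiguous compositions of the explicit formulas for $\gamma_{R_{2n}},\gamma_{T_{2n}}$ modulo inner automorphisms. Once this bookkeeping is in place, the Abelian hypothesis does the real work, reducing the intricate subgroup-preservation condition defining $\Gamma(X)$ to the triviality that the identity matrix stabilizes every submodule of $\mathbb{Z}_d^n$.
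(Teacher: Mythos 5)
Your argument is correct and follows essentially the same route as the paper: both exploit the Abelian hypothesis to reduce the condition $\gamma_A(\pi_1(X,\overline z_0))=\pi_1(X,\overline z_0)$ modulo the characteristic kernel of the mod-$d$ abelianization ${\rm Gal}(\widetilde X_{2n}/P_{2n})\rightarrow \mathbb{Z}_d^n$, obtaining a homomorphism into a finite matrix group whose kernel elements automatically lie in $\Gamma(X)$, which bounds $\sharp{\rm \bf Rep}$ and forces termination. The differences are cosmetic --- the paper runs a pigeonhole on the images $\Phi_d(A)$ and their action on the submodule $V=\nu(\pi_1(X,\overline z_0))$ rather than passing to $\ker\rho$, yielding the same kind of bound --- and your parenthetical claim that $D|_{\widetilde\Gamma}$ has finite kernel is dubious (the kernel contains $\widetilde\Gamma\cap{\rm Gal}(\widetilde X_{2n}/P_{2n})$) but harmless, since finite index of $K$ in $\widetilde\Gamma$ already gives finite index of $D(K)$ in $D(\widetilde\Gamma)$.
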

\begin{proof}
Recall that $z_0$ is the point of $P_{2n}$ which corresponds to the center of the $2n$-gon $\Pi_{2n}$ as in Example \ref{exampleofveech2} and 
$\overline z_0 \in X$ is one of the preimages of $z_0$.
Since $X$ is a Galois covering, for each 
$w \in {\rm Gal}(\widetilde X_{2n}/P_{2n}) = \left<x_1, x_2, \cdot \cdot \cdot , x_{n}\right>$, 
$w$ is  in $\pi_1(X, \overline z_0) $ if and only if  $w$ is in $\pi_1(X, \overline z_1)$ for all $\overline z_1 \in X$.
Hence $[A]$ is in $\Gamma(X)$ if and only if $\gamma_A$ or $\gamma_{-A}$ fix $\pi_1(X, \overline z_0)$ for each$[A] \in \left<[R_{2n}], [T_{2n}]\right>$. 

As ${\rm Gal}(X/P_{2n})\cong \pi_1(P_{2n}, z_0)/ \pi_1(X, \overline z_0)$ is an Abelian group,
$\overline{x_ix_j}=\overline{x_jx_i}$ and $x_ix_j = x_j x_i\cdot  w$ for some $w \in \pi_1(X, \overline z_0)$.
Moreover,  set $d = {\rm lcm} \{{\rm ord}(\overline x_1), {\rm ord}(\overline x_2), \cdot \cdot \cdot , {\rm ord}(\overline x_{n})\}$ ,
then $x_i^d \in \pi_1(X,\overline z_1)$ for all $i$ and all $\overline z_1 \in X$.

Set $(e_1, e_2, \cdot \cdot \cdot , e_{2n})=I_{2n}$. We consider the homomorphism $\nu  :{\rm Gal}(\widetilde X_{2n}/P_{2n})\rightarrow \mathbb{Z}^n_d \ ; x_i \mapsto e_i$.
Then there exists a homomorphism $\Phi_d : \left<\gamma_T, \gamma_R \right> \rightarrow {\rm SL} (n,\mathbb{Z}_d)$ such that the following diagram is commutative. 
\[\xymatrix{
{\rm Gal}(\widetilde X_{2n}/P_{2n}) \ar[d]_\nu  \ar[r]^{\gamma_A}& {\rm Gal}(\widetilde X_{2n}/P_{2n}) \ar[d]^\nu\\
\mathbb{Z}^n_d \ar[r]^{\Phi_d(A)} &  \mathbb{Z}^n_d
}\]
Set $V = \nu (\pi_1(X, \overline z_0))$. 
For each $[A] \in \left<[R_{2n}], [T_{2n}] \right>$, if $[A]$ satisfies $\Phi_d(A)(V)=V$, then $[A]\in \Gamma(X)$.

Now we conclude that $\sharp$ {\bf Rep} $\leq  \sharp {\rm SL}(n,\mathbb{Z}_d)$  at every step of the calculation.
Suppose that $\sharp$ {\bf Rep} $> \sharp {\rm SL}(n,\mathbb{Z}_d)$ happens at some step of the calculation. 
Then there exists two distinct elements  $[A]$ and  $[B]$ in {\bf Rep} such that $\Phi_d(A)=\Phi_d(B)$.
Since $\Phi_d(AB^{-1})=I_{n}$ stabilizes $V$, \ $[A]\cdot [B]^{-1}$ is in $\Gamma(X)$.
However, since $[A]$ and $[B]$ are distinct elements in {\bf Rep},  $[A]\cdot [B]^{-1}$ is not in $\Gamma(X)$.
This is a contradiction.
\end{proof}
 From the proof of theorem \ref{main2}, we have the following.
 
\begin{corollary}\label{cor}
Let $X$ be a finite Abelian covering of $P_{2n}$. 
If there exists $d \in \mathbb{N}$ such that 
 $\{{\rm ord}(\overline x_1), {\rm ord}(\overline x_2), \cdot \cdot \cdot , {\rm ord}(\overline x_{n})\} = \{d\}$ or $\{1, d\}$,
then 
$[A]\in \Gamma(X)$ if and only if $\Phi_d(A)(V)=V$ for  each $[A] \in \left<[R_{2n}], [T_{2n}] \right>$.
\end{corollary}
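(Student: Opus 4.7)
The plan is to establish only the reverse implication ``$[A] \in \Gamma(X) \Rightarrow \Phi_d(A)(V) = V$'', since the proof of Theorem \ref{main2} already contains the forward direction. The guiding idea is that, because ${\rm Gal}(X/P_{2n})$ is abelian and all its generators $\overline x_i$ have order dividing the common bound $d$, the subgroup $\pi_1(X, \overline z_0) \leq \pi_1(P_{2n}, z_0)$ is completely determined by $V$, so the condition that $\gamma_A$ stabilizes $\pi_1(X, \overline z_0)$ is equivalent to the condition that $\Phi_d(A)$ stabilizes $V$.

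First I would pass to the abelianization. As $X \to P_{2n}$ is Galois, $\pi_1(X, \overline z_0)$ is normal in $\pi_1(P_{2n}, z_0)$ and the same at every preimage of $z_0$, so the base-point ambiguity in the criterion from the proof of Theorem \ref{main2} disappears: $[A] \in \Gamma(X)$ iff $\gamma_A$ or $\gamma_{-A}$ stabilizes $\pi_1(X, \overline z_0)$. Since ${\rm Gal}(X/P_{2n})$ is abelian, $\pi_1(X, \overline z_0)$ contains the commutator subgroup of $\pi_1(P_{2n}, z_0)$, and is therefore the full preimage under the abelianization $\pi_1(P_{2n}) \twoheadrightarrow \mathbb{Z}^n$ of its image $L$; hence $\gamma_A$ stabilizes $\pi_1(X, \overline z_0)$ iff its induced automorphism $\gamma_A^{\mathrm{ab}}$ of $\mathbb{Z}^n$ stabilizes $L$. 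The order hypothesis then gives $x_i^d \in \pi_1(X, \overline z_0)$ for every $i$, so $L \supseteq (d\mathbb{Z})^n$, and consequently $L = \nu_\infty^{-1}(V)$, where $\nu_\infty : \mathbb{Z}^n \to \mathbb{Z}_d^n$ is the reduction map. By the commutative diagram defining $\Phi_d$, the map $\gamma_A^{\mathrm{ab}}$ reduces mod $d$ to $\Phi_d(A)$, while $\gamma_{-A}^{\mathrm{ab}} = -\gamma_A^{\mathrm{ab}}$ (using $R_{2n}^n = -I$ together with the explicit formula $\gamma_{R_{2n}}^n(x_i) = x_i^{-1}$) reduces to $-\Phi_d(A)$, which preserves the subgroup $V$ iff $\Phi_d(A)$ does.

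Stringing these observations together gives the converse: if $[A] \in \Gamma(X)$, then $\gamma_A$ or $\gamma_{-A}$ stabilizes $\pi_1(X, \overline z_0)$, hence $L$, and hence $V$, so $\Phi_d(A)(V) = V$. Combined with the forward direction already in Theorem \ref{main2} this yields the asserted equivalence. I expect the main obstacle to be the careful verification that the $\pm A$ ambiguity in the passage from $[A] \in {\rm PSL}(2,\mathbb{R})$ to $\gamma_{\pm A}$ really does collapse at the level of $\mathbb{Z}_d^n$; once that is settled, the role of the uniform order hypothesis is simply to force $L \supseteq (d\mathbb{Z})^n$, which is precisely what makes $V$ determine $L$ and thereby makes the reverse implication go through — without it one would only have $L \supseteq \bigoplus_i d_i \mathbb{Z}$ with varying $d_i$, and $V$ would no longer determine $L$.
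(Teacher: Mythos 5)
Your proof is correct and follows essentially the same route the paper intends: the corollary is presented as an immediate consequence of the proof of Theorem \ref{main2}, whose machinery ($\nu$, $\Phi_d$, $V$, and the already-established implication $\Phi_d(A)(V)=V\Rightarrow[A]\in\Gamma(X)$) you reuse, supplying the missing converse by passing to the abelianization exactly as the paper sets it up. The only quibble is your closing remark: even with mixed orders one still has $L\supseteq(d\mathbb{Z})^n$ for $d=\mathrm{lcm}\{\mathrm{ord}(\overline x_i)\}$, so the uniform-order hypothesis is not strictly what makes $V$ determine $L$ --- but this does not affect the validity of your argument under the stated hypothesis.
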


\begin{example}
 Let $X$ be the covering of $P_8$ the same as Figure \ref{6}.
Then $X$ satisfies the assumption of Corollary \ref{cor}.
The fundamental group of $X$ is 
\begin{equation}
\nonumber
\pi_1(X,\overline z_0)=\left< x_1^2\ ,\ x_2\ ,\ x_4\ ,\ x_1x_3\ ,\ x_3x_1\ ,\ x_1^{-1}x_2x_1\ ,\ x_1^{-1}x_4x_1 \right>
\end{equation}
and 
\begin{equation}
\nonumber
V=\left< e_2\ ,\ e_4\ ,\ e_1+e_3 \right>_{\mathbb{Z}_2}.
\end{equation}
By Corollary \ref{cor}, for $[A] \in \left<[R_{8}], [T_{8}] \right>$, $[A]$ is in $\Gamma(X)$ if and only if  $\Phi_2(A)$ satisfies the followings : 
% $[A] \in \Gamma(X)\Leftrightarrow \Phi([A])(V)=V$.
\begin{eqnarray}
% \ \ \ \ \ \ \ \ \ \ \ \ \ \Leftrightarrow 
\left\{
 \begin{array}{ll}
\nonumber 
\Phi_2(A)_{1,1}+ \Phi_2(A)_{3,1}+\Phi_2(A)_{1,3}+ \Phi_2(A)_{3,3}\equiv 0 \pmod 2\\
\Phi_2(A)_{1, j}+ \Phi_2(A)_{3, j}\equiv 0 \pmod 2 (j=2,4).
\end{array}
 \right.
 \end{eqnarray}
\end{example}

\section{Examples.}\label{Example}
Finally we show some examples of Veech groups that are calculated by the method of this paper.

\begin{example}
Let $X$ be the double covering of $P_8$ as Figure \ref{10}. 
Then $X$ is a Riemann surface of type $(3,2)$. 
Set $R=[R_8]$, $T=[T_8]$.
Then 
\begin{itemize}
\itemsep=8pt
\item For $[A] \in \left<R, T \right>$, $[A]$ is in $\Gamma(X)$ if and only if  $\Phi_2(A)_{1,j}\equiv 0 \pmod 2\  (j=2,3,4)$, 
\item $\Gamma(X) \\
=\Bigl< T, RT^2R^{-1}, RTRT^2(RTR)^{-1}, (RT)^3(RTRTR)^{-1}, $\\
$(RT)^2R^2T(RTR^2)^{-1}, (RT)^2R^3T(RTRTR^3)^{-1}, $\\
$RTR^2T(RTRTR^2)^{-1}, RTR^3T^2(RTR^3)^{-1},$ \\ $
RTR^3TR, R^2TR^{-2}, R^3(RTR^3T)^{-1} \Bigr>$, 

\item $\Gamma(X)\setminus \left< R, T \right> \\
=\left\{
   \begin{array}{c}
    {I, R, RT, R^2, RTR, RTRT, RTR^2, RTRTR,} \\
{RTRTR^2, RTRTR^3, RTR^3, RTR^3T} 
\end{array}
  \right\}$ and 
\item $\mathbb{H}/\Gamma(X)$ is a Riemann surface of type $(0,11)$.
\end{itemize}

\begin{figure}[h]
 \begin{center}
  \includegraphics*[keepaspectratio, scale=0.35]{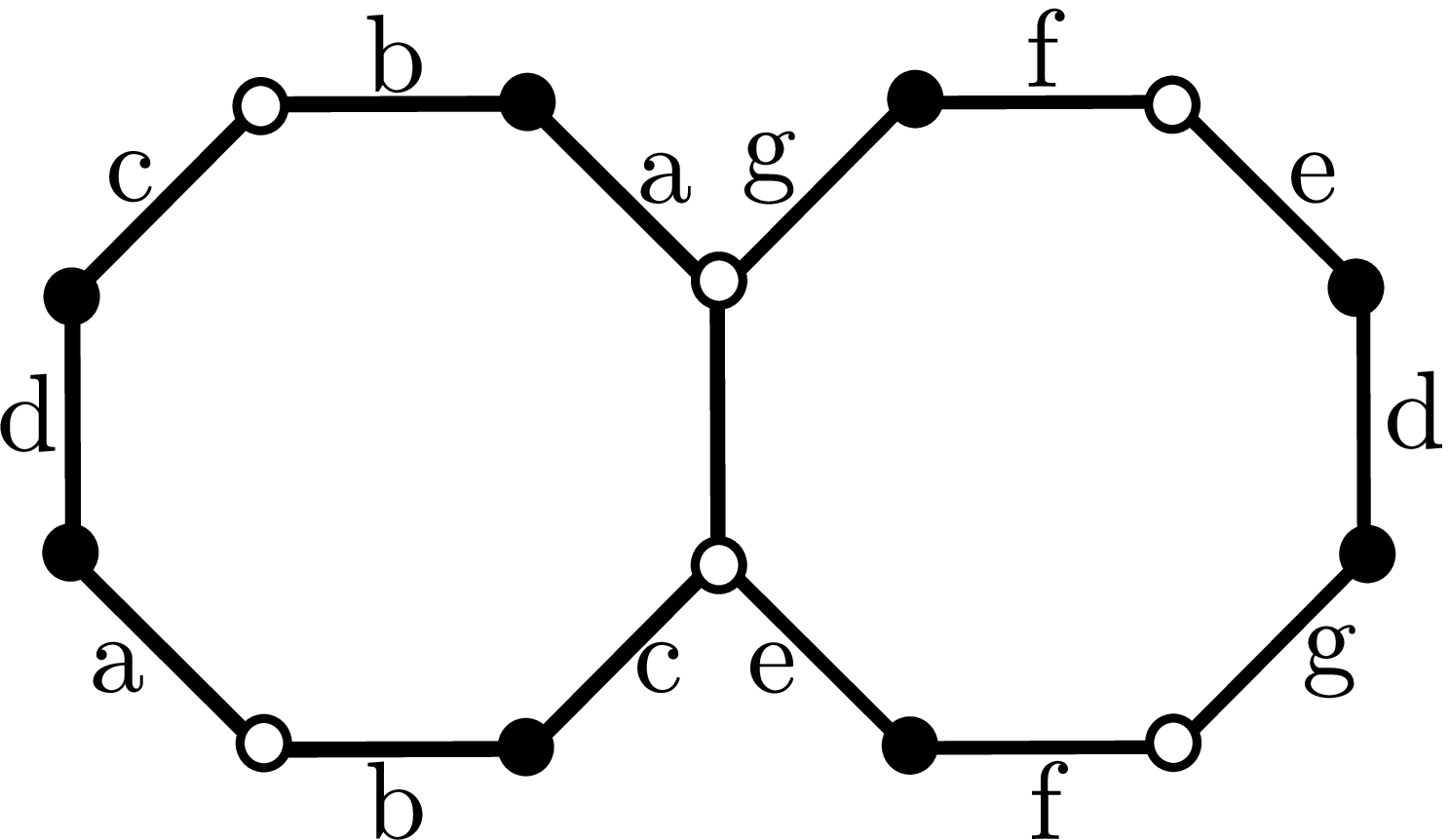}
  \caption{}
  \label{10}
 \end{center}
\end{figure}
\end{example}

\begin{example}
Let $X$ be the covering of $P_8$ as Figure \ref{11}. 
Then $X$ is a Riemann surface of type $(5,4)$. 
Set $R=[R_8]$, $T=[T_8]$.
Then
\begin{itemize}
\itemsep=8pt
\item For $[A] \in \left<R, T \right>$, $[A]$ is in $\Gamma(X)$ if and only if  $\Phi_4(A)$ satisfies the followings : 
\begin{eqnarray}
\left\{
 \begin{array}{ll}
\nonumber 
%  -\Phi(A)_{1,1}+ \Phi(A)_{1,2} -\Phi(A)_{2,1}+ \Phi(A)_{2,2}
%  \equiv -\Phi(A)_{3,1}+ \Phi(A)_{3,2} -\Phi(A)_{4,1}+ \Phi(A)_{4,2}\\
\displaystyle \sum_{i=1}^2 \left( \Phi_4(A)_{i,2} -\Phi_4(A)_{i,1}\right)
\equiv \sum_{i=3}^4 \left(\Phi_4(A)_{i,2} -\Phi_4(A)_{i,1} \right) \pmod 4,\\
\displaystyle \sum_{i=1}^2 \left( \Phi_4(A)_{i,1}+ \Phi_4(A)_{i,j} \right)
\equiv \sum_{i=3}^4 \left( \Phi_4(A)_{i,1}+ \Phi_4(A)_{i,j} \right) \pmod 4 (j=3,4),
\end{array}
 \right. 
 \end{eqnarray}
\item $\Gamma (X)= \left< T, R^2(RT)^{-1}, RT^2R^{-1}, RTRT(RTR)^{-1}, RTR^2 \right>$,
\item $\Gamma (X)\setminus \left< R, T \right> =\{ I, R, RT, RTR \}$ and
\item $\mathbb{H}/\Gamma(X)$ is a Riemann surface of type $(0,5)$.
\end{itemize}

\begin{figure}[h]
 \begin{center}
  \includegraphics*[keepaspectratio, scale=0.35]{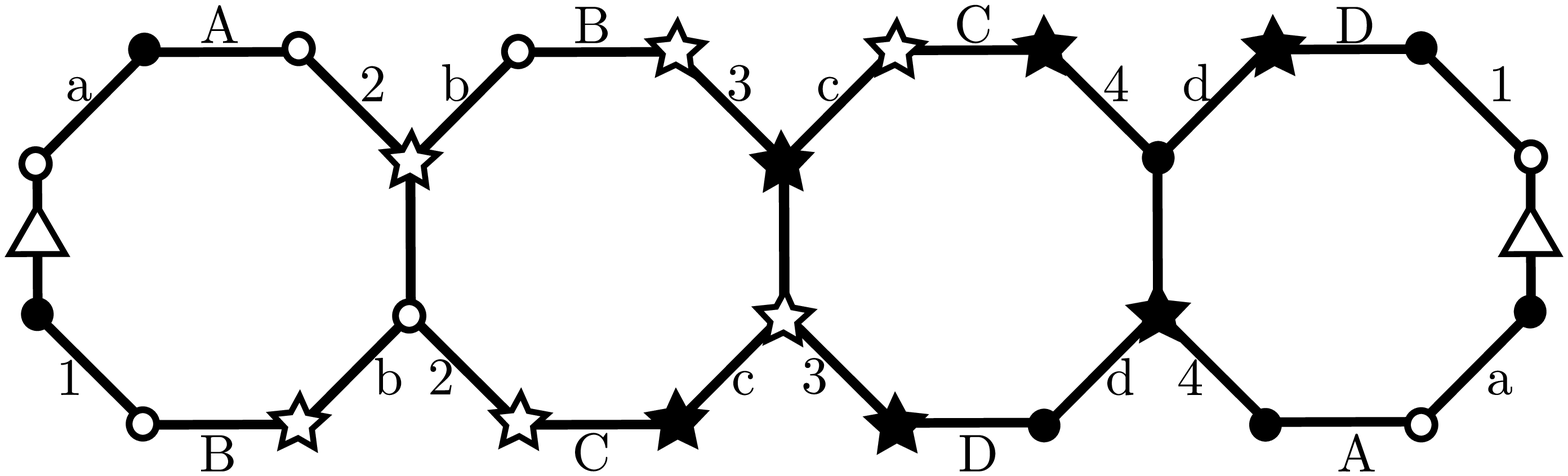}
  \caption{}
  \label{11}
 \end{center}
\end{figure} 
\end{example}

\begin{example}
$n\geq 2$. 
Let $X_{4n}$ be the double covering of $P_{4n}$ as Figure \ref{12}.
That is, $X_{4n}$ is constructed by gluing two regular $4n$-gons. 
Labels of small and capital letters appear in turn.
The sides whose labels are capital letters are identified with the opposite sides of another polygon
and others are identified with the opposite sides of the same polygon. 
Then $X_{4n}$ is a Riemann surface of type $(2n-1,2)$.
\begin{itemize}
\itemsep=8pt
\item For $[A] \in \left<[R_{4n}], [T_{4n}] \right>$, $[A]$ is in $\Gamma(X_{4n})$ if and only if  $\Phi_2(A)$ satisfies the followings : 
\begin{eqnarray}
\left\{
 \begin{array}{ll}
\nonumber 
%  -\Phi(A)_{1,1}+ \Phi(A)_{1,2} -\Phi(A)_{2,1}+ \Phi(A)_{2,2}
%  \equiv -\Phi(A)_{3,1}+ \Phi(A)_{3,2} -\Phi(A)_{4,1}+ \Phi(A)_{4,2}\\
\displaystyle \sum_{i=1}^{n} \Phi_2(A)_{2i-1,2j}\equiv 0 \pmod 2 (j=1, \cdot \cdot \cdot , n),\\
\displaystyle \sum_{i=1}^{n}\left ( \Phi_2(A)_{2i-1,1}+ \Phi_2(A)_{2i-1,2j-1}\right)
\equiv 0 \pmod 2 (j=2, \cdot \cdot \cdot , n),
\end{array}
 \right. 
 \end{eqnarray}
\item $\Gamma(X_{4n})= \left<[T_{4n}], [R_{4n}T_{4n}R_{4n}^{-1}],[R_{4n}^2] \right> $,
\item $\Gamma(X_{4n})\setminus \left< [R_{4n}],[T_{4n}] \right> = \{[I], [R_{4n}] \}$ and 
\item $\mathbb{H}/\Gamma(X_{4n})$ is an orbifold which has no genus, 3 punctures and one cone point whose order is $n$.
\end{itemize}

\begin{figure}[h]
 \begin{center}
  \includegraphics*[keepaspectratio, scale=0.33]{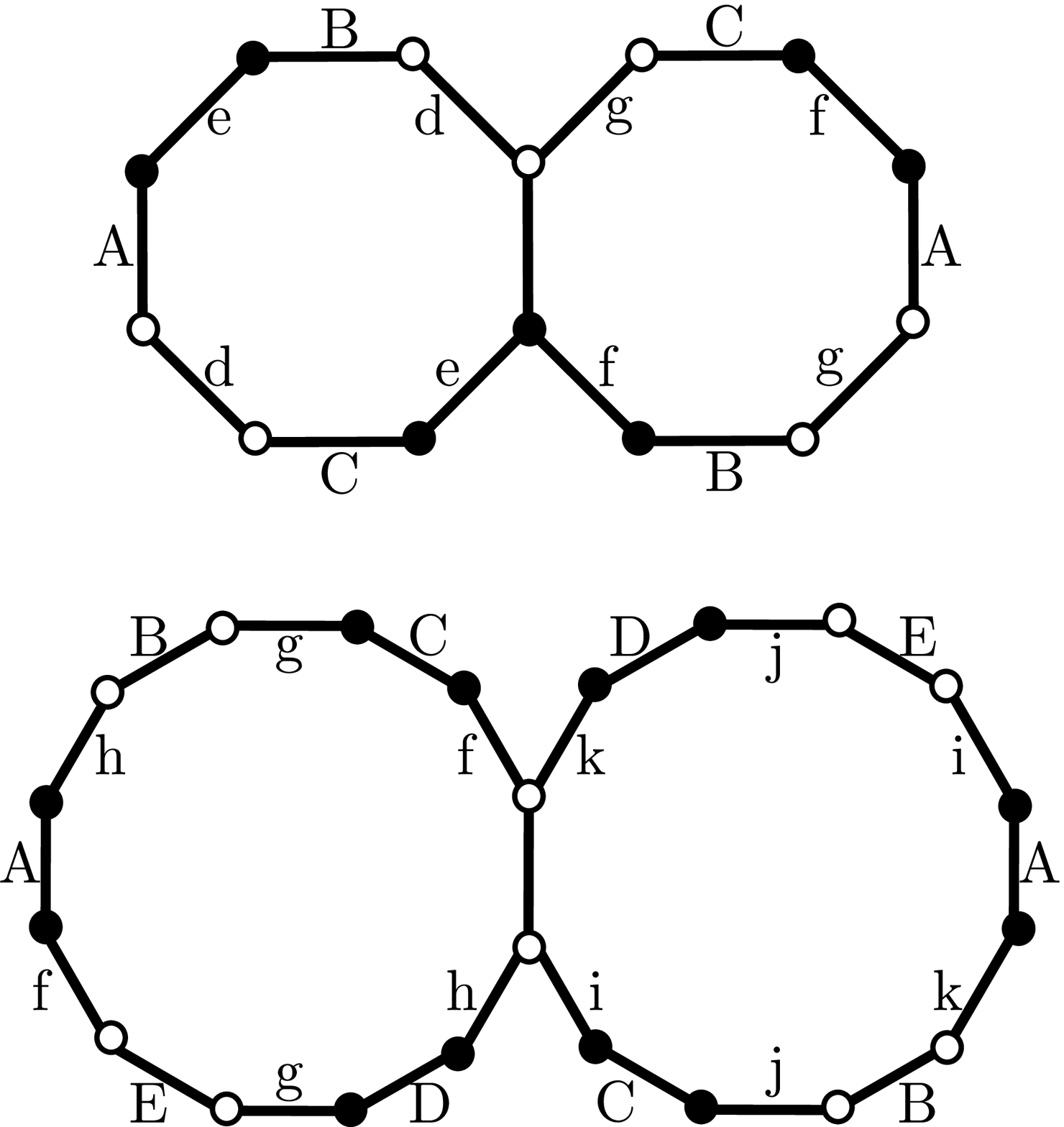}
  \caption{}
  \label{12}
 \end{center}
\end{figure} 

\end{example}

\begin{example}
For each $n\geq 2$, let $X_{4n}$ be the double covering of $P_{4n}$ as Figure \ref{13}.
That is, horizontal and vertical sides of two polygons are identified with the opposite sides of another polygon 
and others are identified with the opposite sides of the same polygon. 
Then $X_{4n}$ is a Riemann surface of type $(2n-1,2)$. 
\begin{itemize}
\itemsep=8pt
\item For $[A] \in \left<[R_{4n}], [T_{4n}] \right>$, $[A]$ is in $\Gamma(X_{4n})$ if and only if  $\Phi_2(A)$ satisfies the followings : 
\begin{eqnarray}
\left\{
 \begin{array}{ll}
\nonumber 
%  -\Phi(A)_{1,1}+ \Phi(A)_{1,2} -\Phi(A)_{2,1}+ \Phi(A)_{2,2}
%  \equiv -\Phi(A)_{3,1}+ \Phi(A)_{3,2} -\Phi(A)_{4,1}+ \Phi(A)_{4,2}\\
 \Phi_2(A)_{1,1} + \Phi_2(A)_{n+1,1} + \Phi_2(A)_{n+1,1} + \Phi_2(A)_{n+1,n+1} \equiv 0 \pmod 2,\\
\Phi_2(A)_{1,j} + \Phi_2(A)_{n+1,j} \equiv 0 \pmod 2 (j=2, \cdot \cdot \cdot , n, n+2, \cdot \cdot \cdot , 2n),
\end{array}
 \right. 
 \end{eqnarray}
\item $\Gamma(X_{4n})= \left<[R_{4n}^{i}T_{4n}R_{4n}^{-i}],[R_{4n}^n] \mid  i = 0, 1, \cdot \cdot \cdot, n-1 \right> $,
\item $\Gamma(X_{4n})\setminus \left< [R_{4n}],[T_{4n}] \right> = \{[I], [R_{4n}], [R_{4n}^2], \cdot \cdot \cdot , [R_{4n}^{n-1}] \}$ and
\item $\mathbb{H}/\Gamma(X_{4n})$ is an orbifold which has no genus, $2n+1$ punctures and one cone point whose order is 2.
\end{itemize}

\begin{figure}[h]
 \begin{center}
  \includegraphics*[keepaspectratio, scale=0.33]{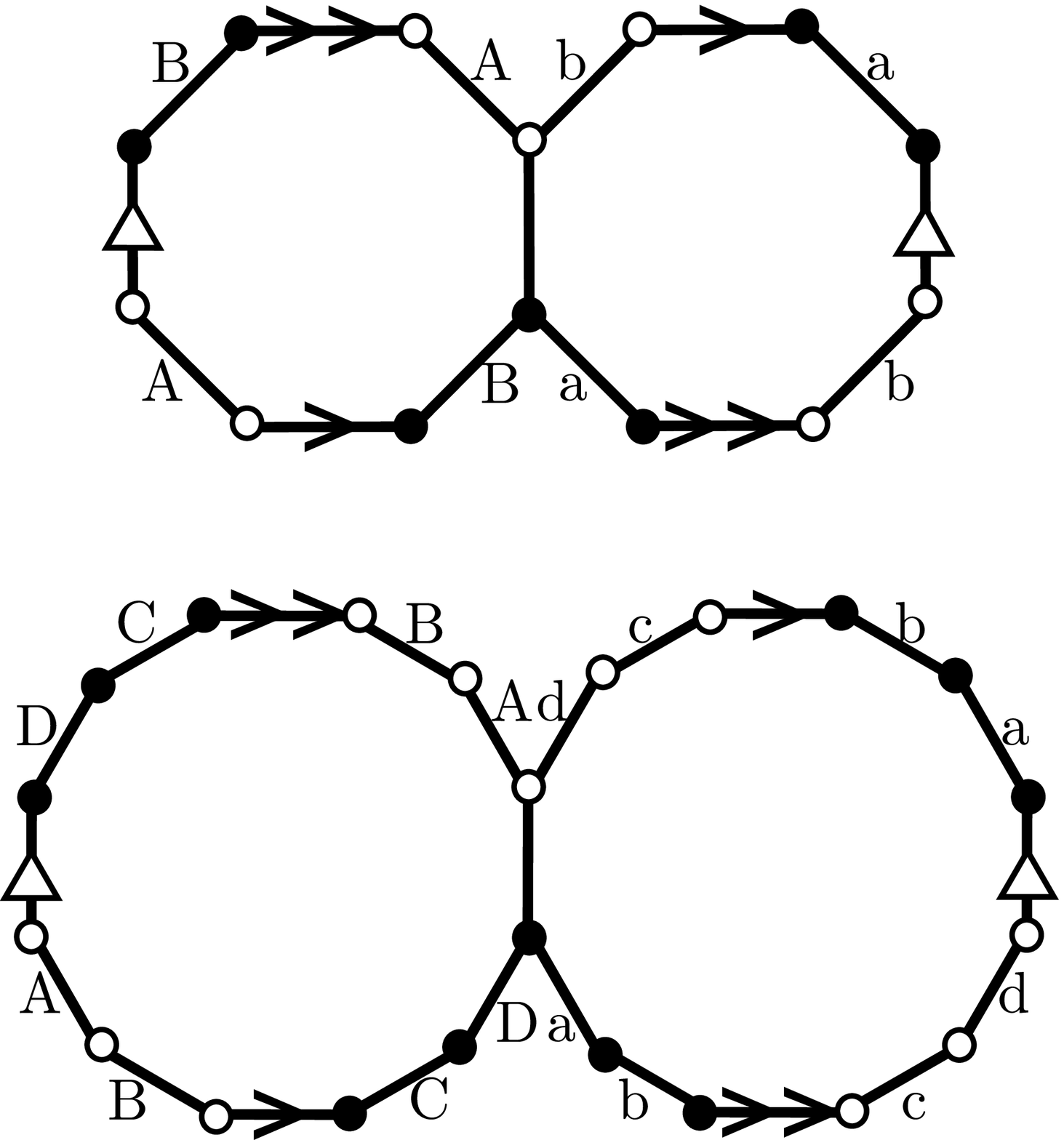}
  \caption{}
  \label{13}
 \end{center}
\end{figure} 
\end{example}

\newpage

\begin{example}
Let $X_d$ be the covering of $P_8$ with degree $d$ as Figure \ref{14}.
Then $X_d$ is a Riemann surface of type $(d+1, d)$.
And , for $[A] \in \left<[R_{8}], [T_{8}] \right>$, $[A]$ is in $\Gamma(X_d)$ if and only if  $\Phi_d(A)_{1,j}\equiv 0 \pmod d (j=2,3,4)$. 
The next is a chart about Veech groups $\Gamma(X_d)$. 
Here,
\begin{itemize}
\item $\sharp$ Rep is the index of $\Gamma(X_d)$ in $\left<[R_8], [T_8] \right>$,
\item $\sharp$ Gen is a number of generators of $\Gamma(X_d)$ by this calculation,
\item ``genus'' is the genus of  $\mathbb{H}/\Gamma(X_d)$,
\item ``puncture'' is the number of punctures of  $\mathbb{H}/\Gamma(X_d)$ and 
\item ``cone point (order)'' is the number of cone points of  $\mathbb{H}/\Gamma(X_d)$ and their orders.
\end{itemize}

\begin{figure}[h]
 \begin{center}
  \includegraphics*[keepaspectratio, scale=0.88]{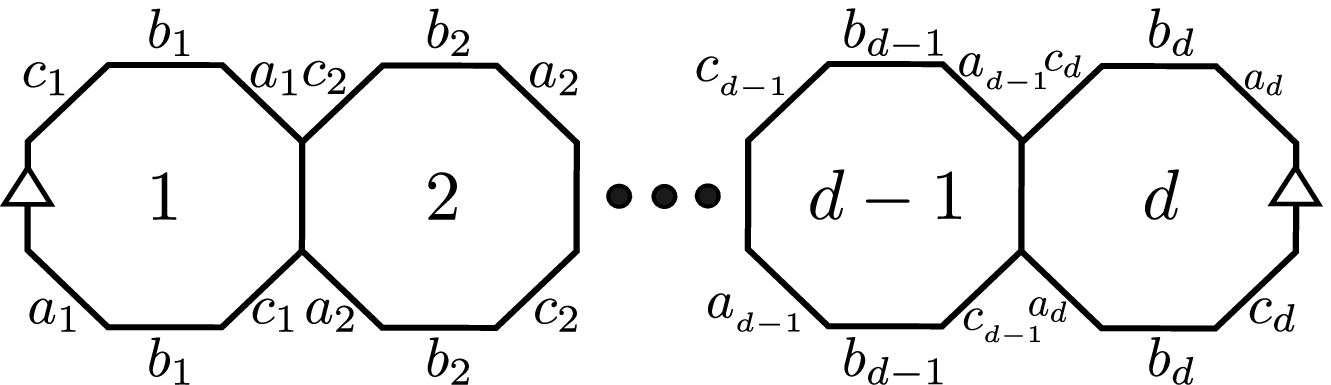}
  \caption{}
  \label{14}
 \end{center}
\end{figure} 

\begin{center}
\begin{tabular}{|r|c|c|c|c|c|c}
\hline
 d & $\sharp$ Gen & $\sharp$ Rep  &  genus & puncture & cone point (order) \\ \hline \hline
 2 &   11  & 12   &   0    &   11   & 0  \\
 3 &   29  & 32  &  1    &   24   & 0  \\ 
 4 &   87  &  96  &  8    &   58   & 0  \\ 
 5 &   142  &  156 & 24    &  68  & 6  (2,2,2,2,2,2)\\
 6 &   349  &  384 &   45   &   200  & 0  \\  
 7 &   367  &  400 &  87   &   128  & 0  \\ 
 8 &   704  &  768 &  149   &   280  & 0  \\ 
 9 &   785  &  864 & 185  &   280  & 0  \\
10 &  1704 &  1872 & 419  &  568   & 0  \\
11 &  1353 &  1464 &  400  &  300   & 0  \\
\hline
\end{tabular} 
\end{center}
\end{example}
\ \\
\begin{example}
Let $X_d$ be the covering of $P_8$ with degree $d$ as Figure \ref{15}.
Then $X_d$ is a Riemann surface of type $(d+1, d)$.
And, for $[A] \in \left<[R_{8}], [T_{8}] \right>$, $[A]$ is in $\Gamma(X_d)$ if and only if  $\displaystyle \sum_{i=1}^{4} \left( \Phi_d(A)_{i,j}- \Phi_d(A)_{i,1} \right) \equiv 0 \pmod d (j=2, 3, 4)$. 
The next is a chart about Veech groups $\Gamma(X_d)$. 
\begin{figure}[h]
 \begin{center}
  \includegraphics*[keepaspectratio, scale=0.88]{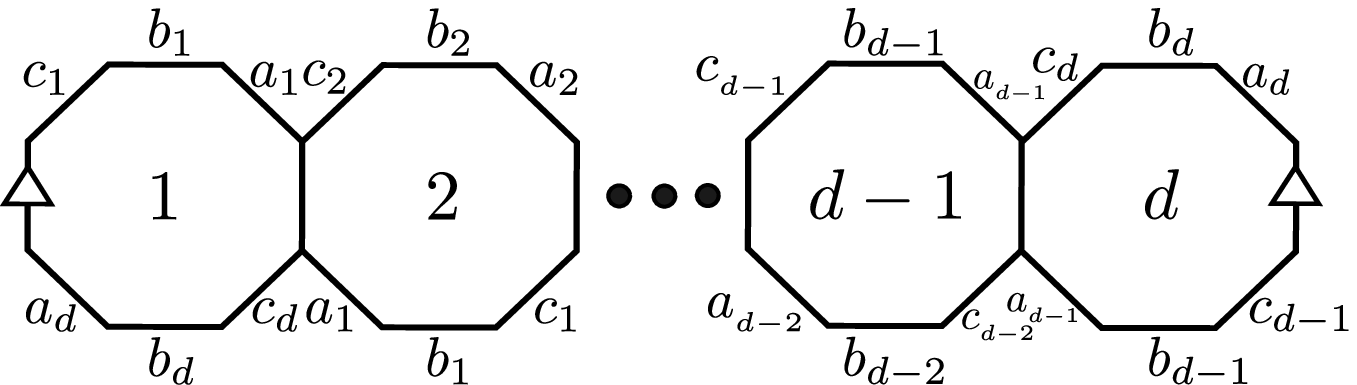}
  \caption{}
  \label{15}
 \end{center}
\end{figure} 
\begin{center}
\begin{tabular}{|r|c|c|c|c|c|c}
\hline
 d & $\sharp$ Gen & $\sharp$ Rep  &  genus & puncture & cone point (order) \\ \hline \hline
 2 &    2    &  1   & 0    &   2   & 1 (4) \\
 3 &    29   & 32  &  1    &   24  & 0  \\ 
 4 &    5    & 4    &  0    &   5   & 0  \\ 
 5 &    142 & 156 &  24   &   68  & 6  (2,2,2,2,2,2)\\
 6 &    29   &  32  &  1    &   24  & 0  \\  
 7 &    367 & 400  &  87   &   128 & 0  \\ 
 8 &    29   & 32   &  1    &   24  & 0  \\ 
 9 &    789 &  864 &  185  &   280 & 0  \\
10 &    142 &  156 & 24   &   68  & 6  (2,2,2,2,2,2)\\
11 &    1353 & 1464&  400  &   300 & 0  \\
12 &    115  &  128 &  11   &   76  & 0  \\
13 &    2220 & 2380 & 682   &   416 & 14 (2,2,2,2,2,2,2,2,2,2,2,2,2,2)\\
14 &    367  & 400 & 87    &   128 & 0 \\
\hline
\end{tabular} 
\end{center}
\end{example}

\section*{Acknowledgments}
This work was supported by Global COE Program ``Computationism as a Foundation for the Sciences".
The author thanks Professor Hiroshige Shiga for his valuable suggestions and comments.

\bibliography{ref}

%%%%% TEXT END %%%%%
\end{document}